\documentclass{article}

\usepackage[final,nonatbib]{neurips_2019}

\usepackage{amsthm,amsmath,mathtools,bm,enumerate,subcaption,amssymb,dsfont}

\usepackage[utf8]{inputenc} 
\usepackage[T1]{fontenc}    
\usepackage{hyperref}       
\usepackage{url}            
\usepackage{booktabs}       
\usepackage{amsfonts}       
\usepackage{nicefrac}       
\usepackage{microtype}      

\usepackage{color}     
\usepackage{graphicx}
\usepackage{standalone}
\usepackage{algorithm}
\usepackage{tikz}
\usetikzlibrary{patterns}
\usetikzlibrary{shadows.blur}
\usetikzlibrary{matrix}
\usetikzlibrary{calc}
\newcommand{\EE}{\mathbb{E}}

\usepackage[mathcal]{eucal}
\DeclareMathOperator{\rank}{rank}


\newcommand{\RR}{\mathbb{R}}
\newcommand{\NN}{\mathbb{N}}

\newcommand{\Ff}{\mathcal{F}}

\newcommand{\Nn}{\mathcal{N}}

\newcommand{\Pp}{\mathcal{P}}

\newcommand{\Ww}{\mathcal{W}}
\newcommand{\Xx}{\mathcal{X}}

\newcommand{\Id}{\mathrm{Id}}


\newcommand{\Lip}{\mathrm{Lip}}



\DeclareMathOperator{\sign}{sign}


\newtheorem{theorem}{Theorem}[section]
\newtheorem{proposition}[theorem]{Proposition}
\newtheorem{lemma}[theorem]{Lemma}

\newtheorem{assumption}[theorem]{Assumption}

\graphicspath{ {images/} }

\title{On Lazy Training in  Differentiable Programming}

\author{
  L\'ena\"ic Chizat\\
  CNRS, Universit\'e Paris-Sud\\
  Orsay, France\\
  \texttt{lenaic.chizat@u-psud.fr} \\
   \And
   Edouard Oyallon\\
   CentraleSupelec, INRIA\\
   Gif-sur-Yvette, France \\
   \texttt{edouard.oyallon@centralesupelec.fr} \\
  \And
   Francis Bach \\
   INRIA, ENS, PSL Research University \\
   Paris, France \\
   \texttt{francis.bach@inria.fr} \\
}

\begin{document}

\maketitle

\begin{abstract}
In a series of recent theoretical works, it was shown that strongly over-parameterized neural networks trained with gradient-based methods could converge exponentially fast to zero training loss, with their parameters hardly varying. In this work, we show that this ``lazy training'' phenomenon is not specific to over-parameterized neural networks, and is due to a choice of scaling, often implicit, that makes the model behave as its linearization around the initialization, thus yielding a model equivalent to learning with positive-definite kernels. Through a theoretical analysis, we exhibit various situations where this phenomenon arises in non-convex optimization and we provide bounds on the distance between the lazy and linearized optimization paths. Our numerical experiments bring a critical note, as we observe that the performance of commonly used non-linear deep convolutional neural networks in computer vision degrades when trained in the lazy regime. This makes it unlikely that ``lazy training'' is behind the many successes of neural networks in difficult high dimensional tasks.
\end{abstract}


\section{Introduction}\label{sec:intro}
Differentiable programming is becoming an important paradigm in signal processing and machine learning that consists in building parameterized models, sometimes with a complex architecture and a large number of parameters, and adjusting these parameters in order to minimize a loss function using gradient-based optimization methods. The resulting problem is in general highly non-convex. It has been observed empirically that, for fixed loss and model class, changes in the parameterization, optimization procedure, or initialization could lead to a selection of models with very different properties~\cite{zhang2016understanding}. This paper is about one such implicit bias phenomenon, that we call \emph{lazy training}, which corresponds to the model behaving like its linearization around the initialization.

This work is motivated by a series of recent articles~\cite{du2018gradient, li2018learning, du2018gradientdeep, allenzhu2018convergence, zou2018sgdoptimizes} where it is shown that over-parameterized neural networks could converge linearly to zero training loss with their parameters hardly varying. With a slightly different approach, it was shown in~\cite{jacot2018neural} that infinitely wide neural networks behave like the linearization of the neural network around its initialization. In the present work, we argue that this behavior is not specific to neural networks, and is not so much due to over-parameterization than to an implicit choice of scaling. By introducing an explicit scale factor, we show that essentially any parametric model can be trained in this lazy regime if its output is close to zero at initialization. This shows that guaranteed fast training is indeed often possible, but at the cost of recovering a linear method\footnote{Here we mean a prediction function linearly parameterized by a potentially infinite-dimensional vector.}. Our experiments on two-layer neural networks and deep convolutional neural networks (CNNs) suggest that this behavior is undesirable in practice. 

\subsection{Presentation of lazy training}
We consider a parameter space\footnote{Our arguments could be generalized to the case where the parameter space is a Riemannian manifold.} $\RR^p$, a Hilbert space $\Ff$, a smooth model $h : \RR^p \to \Ff$ (such as a neural network) and a smooth loss  $R:\Ff\to \RR_+$. We aim to minimize, with gradient-based methods, the objective function $F:\RR^p\to \RR_+$ defined as
\[
F(w) \coloneqq R(h(w)).
\]
With an initialization $w_0\in \RR^p$, we define the linearized model $\bar h(w) = h(w_0) + Dh(w_0)(w-w_0)$ around $w_0$, and the corresponding objective $\bar F : \RR^p\to \RR_+$ as
\[
\bar F(w) \coloneqq R(\bar h(w)).
\]
It is a general fact that the optimization path of $F$ and $\bar F$ starting from $w_0$ are close at the beginning of training. We call \emph{lazy training} the less expected situation where these two paths remain close until the algorithm is stopped.

Showing that a certain non-convex optimization is in the lazy regime opens the way for surprisingly precise results, because linear models are rather well understood. For instance, when $R$ is strongly convex, gradient descent on $\bar F$ with an appropriate step-size converges linearly to a global minimizer~\cite{bottou2018optimization}. For two-layer neural networks, we show in Appendix~\ref{app:randomfeature} that the linearized model is a random feature model~\cite{rahimi2008random} which lends itself nicely to statistical analysis~\cite{carratino2018learning}. Yet, while advantageous from a theoretical perspective, it is not clear \emph{a priori} whether this lazy regime is desirable in practice.

This phenomenon is illustrated in Figure~\ref{fig:cover} where lazy training for a two-layer neural network with rectified linear units (ReLU) is achieved by increasing the variance $\tau^2$ at initialization (see next section). While in panel~(a) the ground truth features are identified, this is not the case for lazy training on panel~(b) that manages to interpolate the observations with just a small displacement in parameter space (in both cases, near zero training loss was achieved). As seen on panel~(c), this behavior hinders good generalization in the teacher-student setting~\cite{saad1995line}. The plateau reached for large $\tau$ corresponds exactly to the performance of the linearized model, see Section~\ref{sec:synthexpe} for details.
\begin{figure}[t]
\centering
\begin{subfigure}{0.3\linewidth}
\centering
\includegraphics[scale=0.4]{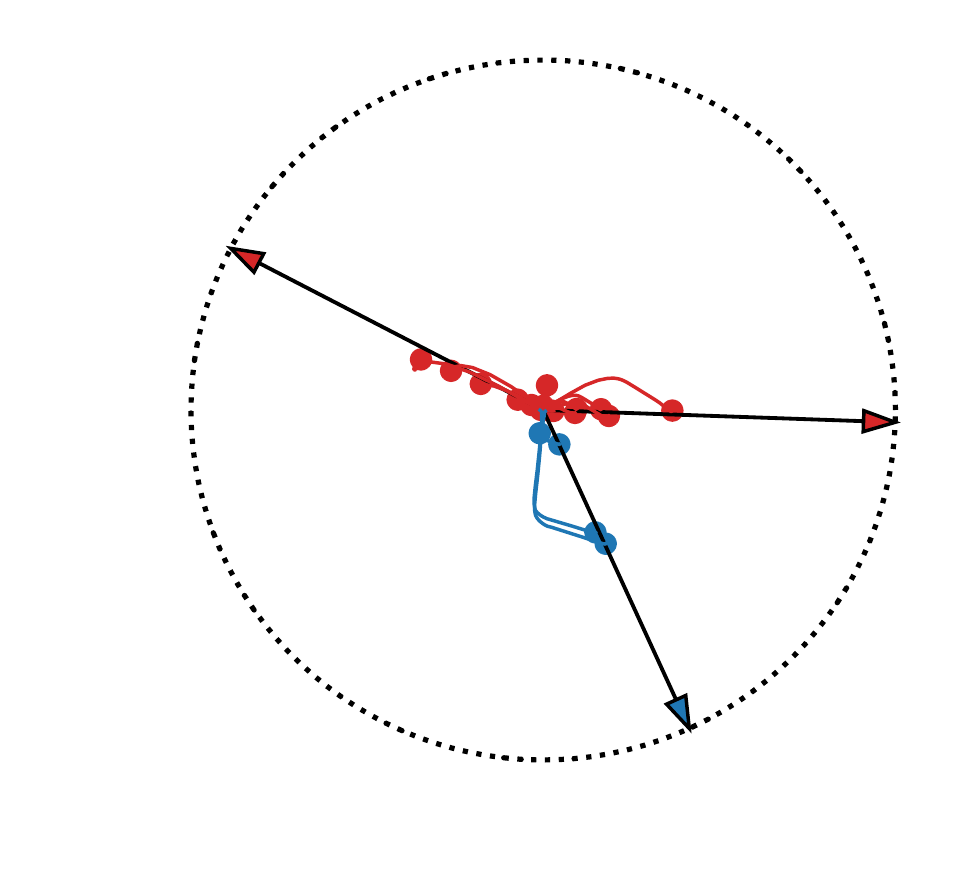}
\caption{Non-lazy training ($\tau=0.1$)}
\end{subfigure}%
\begin{subfigure}{0.3\linewidth}
\centering
\includegraphics[scale=0.4]{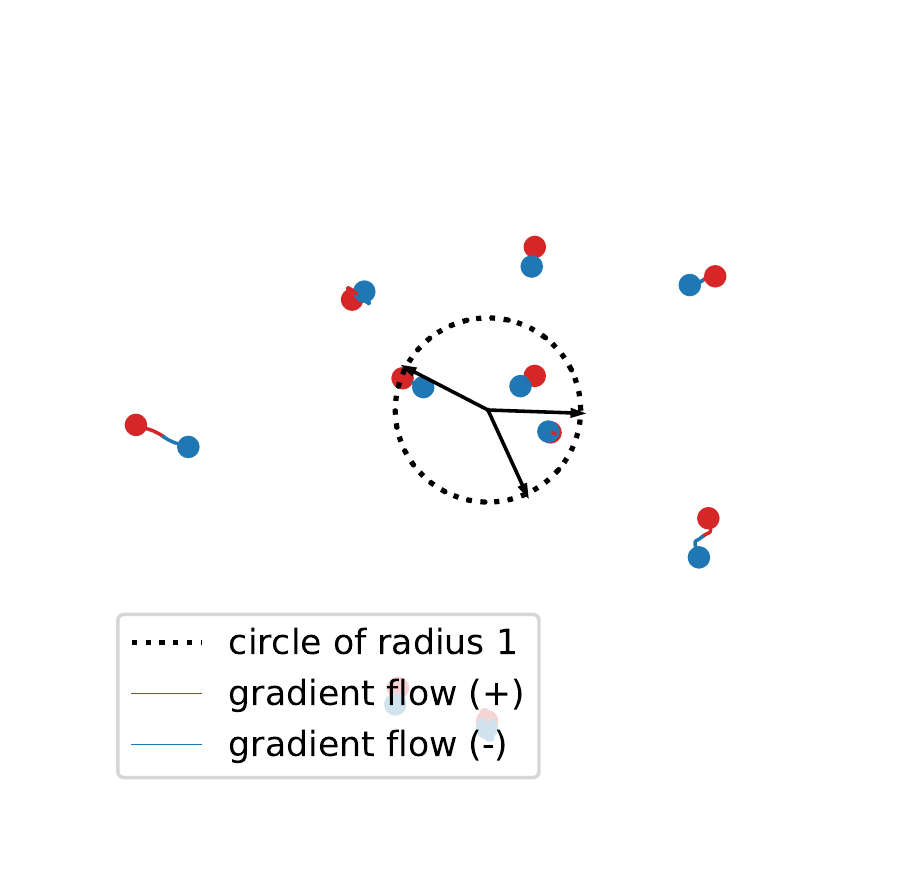}
\caption{Lazy training ($\tau=2$)}
\end{subfigure}%
\begin{subfigure}{0.3\linewidth}
\centering
\includegraphics[scale=0.37]{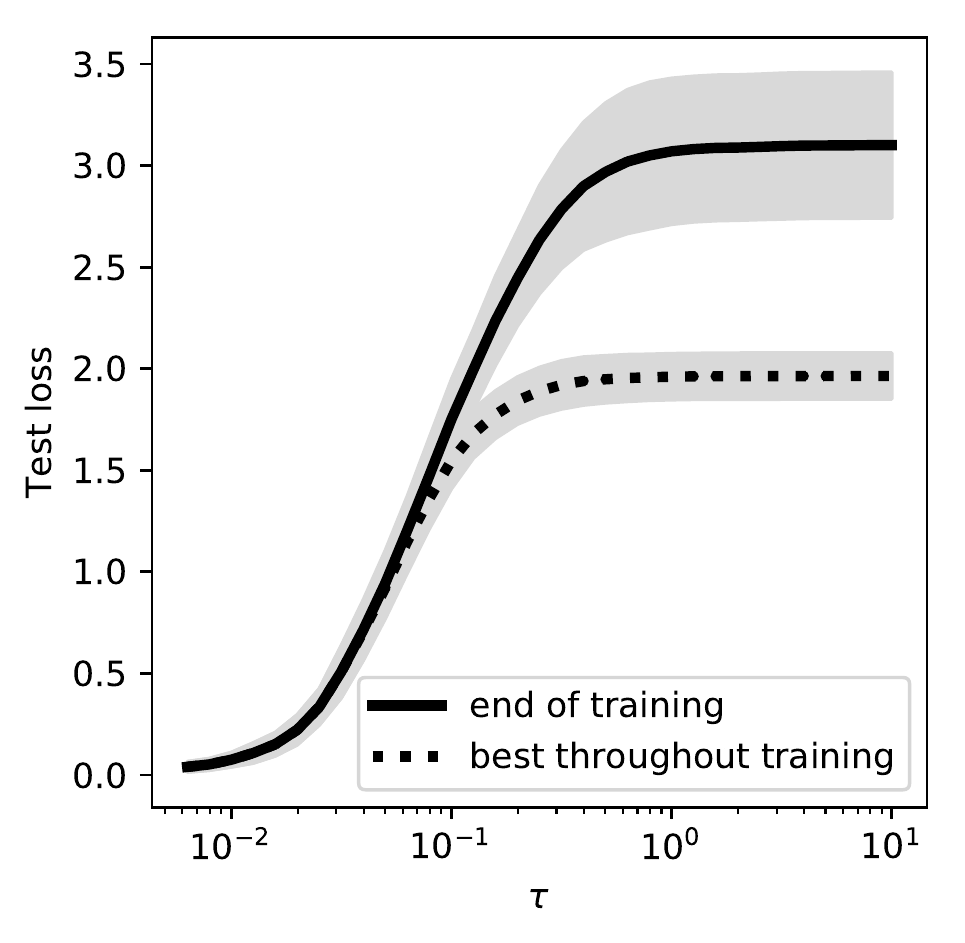}
\caption{Generalization properties}
\end{subfigure}
\caption{Training a two-layer ReLU neural network initialized with normal random weights of variance $\tau^2$: lazy training occurs when $\tau$ is large. (a)-(b) Trajectory of weights during gradient descent in $2$-D (color shows sign of output layer). (c) Generalization in $100$-D: it worsens as $\tau$ increases. The ground truth is generated with $3$ neurons (arrows in (a)-(b)). Details in Section~\ref{sec:numerics}.}
\label{fig:cover}
\end{figure}

\subsection{When does lazy training occur?}\label{sec:range}

\paragraph{A general criterion.} Let us start with a formal computation. We assume that $w_0$ is not a minimizer so that $F(w_0)>0$, and not a critical point so that $\nabla F(w_0)\neq 0$. Consider a gradient descent step $w_1 \coloneqq w_0 - \eta \nabla F(w_0)$, with a small stepsize $\eta>0$. On the one hand, the relative change of the objective is
$
\Delta(F) \coloneqq \frac{\vert F(w_1)-F(w_0)\vert }{F(w_0)} \approx \eta \frac{\Vert \nabla F(w_0)\Vert^2}{F(w_0)}.
$
On the other hand, the relative change of the differential of $h$ measured in operator norm is
$
\Delta (Dh) \coloneqq \frac{\Vert Dh(w_1) - Dh(w_0)\Vert}{\Vert Dh(w_0)\Vert} \leq \eta \frac{\Vert \nabla F(w_0)\Vert\cdot \Vert D^2h(w_0)\Vert}{\Vert Dh(w_0)\Vert}
$. Lazy training refers to the case where the differential of $h$ does not sensibly change while the loss enjoys a significant decrease, i.e.,  $\Delta(F) \gg \Delta(Dh)$. Using the above estimates, this is guaranteed when
\begin{align*}
 \frac{\Vert \nabla F(w_0)\Vert}{F(w_0)}  \gg \frac{\Vert D^2h(w_0)\Vert}{\Vert Dh(w_0)\Vert}.
\end{align*}
For the square loss $R(y)=\frac12 \Vert y - y^\star\Vert^2$ for some $y^\star\in \Ff$, this leads to the simpler criterion
\begin{align}\label{eq:lazycriterionsimple}
\kappa_h(w_0) \coloneqq \Vert h(w_0)-y^\star\Vert\frac{ \Vert D^2h(w_0)\Vert} {\Vert Dh(w_0)\Vert^2}\ll 1,
\end{align}
using the approximation $\Vert \nabla F(w_0) \Vert = \Vert Dh(w_0)^\intercal (h(w_0)-y^\star) \Vert \approx \Vert Dh(w_0)\Vert \cdot \Vert h(w_0)-y^\star\Vert$. This quantity $\kappa_h(w_0)$ could be called the inverse \emph{relative scale} of the model $h$ at $w_0$. We prove in Theorem~\ref{th:square} that it indeed controls how much the training dynamics differs from the linearized training dynamics when $R$ is the square loss\footnote{Note that lazy training could occur even when $\kappa_h(w_0)$ is large, i.e. Eq.~\eqref{eq:lazycriterionsimple} only gives a sufficient condition.}. For now, let us explore situations in which lazy training can be shown to occur, by investigating the behavior of $\kappa_h(w_0)$.

\paragraph{Rescaled models.} Considering a scaling factor $\alpha>0$, it holds
\[
\kappa_{\alpha h}(w_0) = \frac1\alpha \Vert \alpha h(w_0) -y^\star\Vert \frac{ \Vert D^2h(w_0)\Vert}{\Vert Dh(w_0)\Vert^2}.
\]
Thus, $\kappa_{\alpha h}(w_0)$ simply decreases as $\alpha^{-1}$ when $\alpha$ grows and $\Vert \alpha h(w_0)-y^\star\Vert$ is bounded, leading to lazy training for large~$\alpha$. Training dynamics for such rescaled models are studied in depth in Section~\ref{sec:dynamics}. For neural networks, there are various ways to ensure $h(w_0)=0$, see Section~\ref{sec:numerics}.

\paragraph{Homogeneous models.}
If $h$ is $q$-positively homogeneous\footnote{That is, for  $q\geq 1$, it holds $h(\lambda w) = \lambda^q h(w)$ for all $\lambda>0$ and $w\in \RR^p$.} then multiplying the initialization by $\lambda$  is equivalent to multiplying the scale factor $\alpha$ by $\lambda^q$. In equation,
\[
\kappa_{h}(\lambda w_0) = \frac1{\lambda^q}\Vert \lambda^q h(w_0) -y^\star\Vert\frac{ \Vert D^2h(w_0)\Vert}{\Vert Dh(w_0)\Vert^2}.
\]
This formula applies for instance to $q$-layer neural networks consisting of a cascade of homogenous non-linearities and linear, but not affine, operators. Such networks thus enter the lazy regime as the variance of initialization increases, if one makes sure that the initial output has bounded norm (see Figures~\ref{fig:cover} and~\ref{fig:losses}(b) for $2$-homogeneous examples).

\paragraph{Two-layer neural networks.} \label{par:wide}
For $m, d \in \NN$, consider functions $h_m : (\RR^d)^m\to \Ff$ of the form
\[
h_m(w) = \alpha(m) \sum_{i=1}^m \phi(\theta_i),
\]
where $\alpha(m)>0$ is a normalization, $w=(\theta_1,\dots,\theta_m)$ and $\phi:\RR^d\to \Ff$ is a smooth function. This setting covers the case of two-layer neural networks (see Appendix~\ref{app:randomfeature}). When initializing with independent and identically distributed variables $(\theta_i)_{i=1}^m$ satisfying $\EE \phi(\theta_i)=0$, and under the assumption that $D\phi$ is not identically $0$ on the support of the initialization, we prove in Appendix~\ref{app:randomfeature} that for large $m$ it holds
\[
\EE[\kappa_{h_m}(w_0)] \lesssim m^{-\frac12} +(m\alpha(m))^{-1}.
\]
As a consequence, as long as $m \alpha(m)\to \infty$ when $m\to \infty$, such models are bound to reach the lazy regime. In this case, the norm of the initial output becomes negligible in front of the scale as $m$ grows due to the statistical cancellations that follow from the assumption $\EE\phi(\theta_i)=0$. In contrast, the critical scaling $\alpha(m)=1/m$, allows to converge as $m\to \infty$ to a non degenerate dynamic described by a partial differential equation and referred to as the mean-field limit~\cite{mei2018mean,chizat2018global,rotskoff2018neural,sirignano2018mean}. 

\subsection{Content and contributions}
\label{sec:content}

The goal of this paper is twofold: (i) understanding in a general optimization setting when lazy training occurs, and (ii) investigating the practical usefulness of models in the lazy regime. It  is organized as follows:
\begin{itemize}
\item in Section~\ref{sec:dynamics}, we study the gradient flows for rescaled models $\alpha h$ and prove in various situations that for large $\alpha$, they are close to gradient flows of the linearized model. When the loss is strongly convex, we also prove that lazy gradient flows converge linearly, either to a global minimizer for over-parameterized models, or to a local minimizer for under-parameterized models.
\item in Section~\ref{sec:numerics}, we use numerical experiments on synthetic cases to illustrate how lazy training differs from other regimes of training (see also Figure~\ref{fig:cover}). Most importantly, we show empirically that CNNs used in practice could be far from the lazy regime, with their performance not exceeding that of some classical linear methods as they become lazy.
\end{itemize}
Our focus is on general principles and qualitative description.

\paragraph{Updates of the paper.} This article is an expanded version of ``A Note on Lazy Training in Supervised Differential Programming'' that appeared online in December 2018. Compared to the first version, it has been complemented with finite horizon bounds in Section~\ref{sec:finitehorizon} and numerical experiments on CNNs in Section~\ref{sec:CNNexpe} while the rest of the material has just been slightly reorganized.
%


\section{Analysis of Lazy Training Dynamics}\label{sec:dynamics}

\subsection{Theoretical setting}
Our goal in this section is to show that lazy training dynamics for the scaled objective
 \begin{equation}\label{eq:scaledobjective}
F_\alpha(w) \coloneqq \frac{1}{\alpha^2}R(\alpha h(w))
\end{equation}
 are close, when the scaling factor $\alpha$ is large, to those of the scaled objective for the linearized model
 \begin{equation}\label{eq:scaledobjectivelinear}
\bar F_\alpha(w) \coloneqq \frac{1}{\alpha^2}R(\alpha \bar h(w)) ,
\end{equation}
where $\bar h(w)\coloneqq h(w_0) + Dh(w_0)(w-w_0)$ and $w_0 \in \RR^p$ is a fixed initialization. Multiplying the objective by $1/\alpha^2$ does not change the minimizers, and corresponds to the proper time parameterization of the dynamics for large $\alpha$. Our basic assumptions are the following:
\begin{assumption} The parametric model $h:\RR^p\to \Ff$ is differentiable  with a locally Lipschitz differential\footnote{$Dh(w)$ is a continuous linear map from $\RR^p$ to $\Ff$. The Lipschitz constant of $Dh:w\mapsto Dh(w)$ is defined with respect to the operator norm. When $\Ff$ has a finite dimension, $Dh(w)$ can be identified with the Jacobian matrix of $h$ at $w$.} $Dh$. Moreover, $R$ is differentiable with a Lipschitz gradient.
\end{assumption}
This setting is mostly motivated by supervised learning problems, where one considers a probability distribution $\rho \in \Pp(\RR^d\times \RR^k)$ and defines $\Ff$ as the space $L^2(\rho_x;\RR^k)$ of square-integrable functions with respect to $\rho_x$, the marginal of $\rho$ on $\RR^d$. The risk $R$ is then built from a smooth loss function $\ell: (\RR^k)^2 \to \RR_+$ as $R(g)=\EE_{(X,Y)\sim \rho} \ell(g(X),Y)$. This corresponds to empirical risk minimization when $\rho$ is a finite discrete measure, and to population risk minimization otherwise (in which case only stochastic gradients are available to algorithms). Finally, one defines $h(w) = f(w,\cdot)$ where $f:\RR^p\times \RR^d\to \RR^k$ is a parametric model, such as a neural network, which outputs in $\RR^k$ depend on parameters in $\RR^p$ and input data in $\RR^d$. 

\paragraph{Gradient flows.} 
In the rest of this section, we study the \emph{gradient flow} of the objective function $F_\alpha$ which is an approximation of (accelerated) gradient descent~\cite{gautschi1997numerical, scieur2017integration} and stochastic gradient descent~\cite[Thm. 2.1]{kushner2003stochastic} with small enough step sizes.
With an initialization $w_0\in \RR^p$, the gradient flow of $F_\alpha$ is the path $(w_\alpha(t))_{t\geq 0}$ in the space of parameters $\RR^p$ that satisfies $w_\alpha(0)=w_0$ and solves the ordinary differential equation
\begin{equation}\label{eq:lazyGF}
w'_\alpha(t) = - \nabla F_\alpha(w_\alpha(t)) =-  \frac1\alpha Dh(w_\alpha(t))^\intercal \nabla R(\alpha h(w_\alpha(t))) ,
\end{equation}
where $Dh^\intercal$ denotes the adjoint of the differential $Dh$. We will study this dynamic for itself, and will also compare it to the gradient flow $(\bar w_\alpha(t))_{t\geq 0}$ of $\bar F_\alpha$ that satisfies $\bar w_\alpha (0)=w_0$ and solves
\begin{equation}\label{eq:linearizedGF}
\bar w'_\alpha(t) = - \nabla \bar F_\alpha(\bar w_\alpha(t)) = -  \frac1\alpha Dh(w_0)^\intercal \nabla R(\alpha \bar h(\bar w_\alpha(t))).
\end{equation}
Note that when $h(w_0)=0$, the renormalized dynamic $w_0 + \alpha (\bar w_\alpha(t)-w_0)$ does not depend on $\alpha$, as it simply follows the gradient flow of $w\mapsto R(Dh(w_0)(w-w_0))$ starting from $w_0$.

\subsection{Bounds with a finite time horizon}\label{sec:finitehorizon}
We start with a general result  that confirms that when $h(w_0)=0$, taking large $\alpha$ leads to lazy training. We do not assume convexity of $R$.
\begin{theorem}[General lazy training]\label{th:generallazy}
Assume that $h(w_0)=0$. Given a fixed time horizon $T>0$, it holds $\sup_{t\in [0,T]} \Vert w_\alpha(t)-w_0\Vert =O(1/\alpha)$,
\begin{align*}
\sup_{t\in [0,T]} \Vert w_\alpha(t) -\bar w_\alpha(t)\Vert = O(1/\alpha^2) &&\text{and}&& \sup_{t\in [0,T]} \Vert \alpha h(w_\alpha(t)) - \alpha \bar h (\bar w_\alpha(t))\Vert = O(1/\alpha).
\end{align*}
\end{theorem}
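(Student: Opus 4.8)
The plan is to derive all three estimates from Gr\"onwall's inequality applied to the two flows and their difference, after first pinning down a neighborhood of $w_0$ on which the relevant quantities are controlled. By the local Lipschitz assumption on $Dh$, I would fix a radius $r>0$ and constants $M, L_h$ so that $\Vert Dh(w)\Vert \le M$ and $Dh$ is $L_h$-Lipschitz on the closed ball $\bar B(w_0, r)$, and let $L_R$ denote the Lipschitz constant of $\nabla R$. All $O(\cdot)$ below are meant to have constants depending on $T$ but uniform in $\alpha$.

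First I would bound the displacement of each flow. Since $h(w_0)=0$, along any path staying in the ball one has $\Vert \alpha h(w_\alpha(t))\Vert = \alpha\Vert h(w_\alpha(t)) - h(w_0)\Vert \le \alpha M \Vert w_\alpha(t)-w_0\Vert$, whence $\Vert \nabla R(\alpha h(w_\alpha(t)))\Vert \le \Vert \nabla R(0)\Vert + L_R \alpha M \Vert w_\alpha(t)-w_0\Vert$. Substituting into \eqref{eq:lazyGF} and writing $u(t) = \Vert w_\alpha(t)-w_0\Vert$ gives $u'(t) \le (M/\alpha)\Vert \nabla R(0)\Vert + M^2 L_R\, u(t)$; the crucial point is that the factor of $\alpha$ cancels in the second term. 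Gr\"onwall's lemma with $u(0)=0$ then yields $u(t)\le C(T)/\alpha$ on $[0,T]$. A standard continuity (bootstrapping) argument closes the loop: for $\alpha$ large enough that $C(T)/\alpha<r$, the path cannot exit $\bar B(w_0,r)$ on $[0,T]$, so the a priori bound is justified. The identical computation applied to \eqref{eq:linearizedGF}, using $\bar h(w_0)=h(w_0)=0$, gives $\Vert \bar w_\alpha(t)-w_0\Vert = O(1/\alpha)$ as well. This establishes the first displacement bound and, as a byproduct, that $\Vert \alpha h(w_\alpha(t))\Vert$ and $\Vert \alpha\bar h(\bar w_\alpha(t))\Vert$ are $O(1)$.

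Next I would estimate the gap $e(t) = w_\alpha(t)-\bar w_\alpha(t)$, which starts at $e(0)=0$. Subtracting \eqref{eq:linearizedGF} from \eqref{eq:lazyGF} and splitting the integrand as
\[
[Dh(w_\alpha)^\intercal - Dh(w_0)^\intercal]\nabla R(\alpha h(w_\alpha)) + Dh(w_0)^\intercal[\nabla R(\alpha h(w_\alpha)) - \nabla R(\alpha\bar h(\bar w_\alpha))],
\]
the first term is controlled by $L_h$-Lipschitzness of $Dh$ as $L_h\, u(t)\cdot O(1) = O(1/\alpha)$, and the second by $L_R$-Lipschitzness of $\nabla R$ as $M L_R \alpha\Vert h(w_\alpha)-\bar h(\bar w_\alpha)\Vert$. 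For the latter I would use the split $h(w_\alpha)-\bar h(\bar w_\alpha) = [h(w_\alpha)-\bar h(w_\alpha)] + Dh(w_0)e(t)$, where the linearization remainder obeys the Taylor bound $\Vert h(w_\alpha)-\bar h(w_\alpha)\Vert \le \tfrac{L_h}{2}u(t)^2 = O(1/\alpha^2)$. After dividing by $\alpha$, the differential inequality for $e$ reads $\Vert e'(t)\Vert \le A(T)/\alpha^2 + M^2 L_R\Vert e(t)\Vert$, where again the $\alpha$ in $\alpha u(t)^2$ cancels one power of $\alpha$. A final application of Gr\"onwall gives $\sup_{t\in[0,T]}\Vert e(t)\Vert = O(1/\alpha^2)$, the second claim. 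The third then follows immediately: combining the remainder bound and the gap bound yields $\Vert h(w_\alpha)-\bar h(\bar w_\alpha)\Vert = O(1/\alpha^2)$, and multiplying by $\alpha$ gives $O(1/\alpha)$.

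The main obstacle is the careful bookkeeping of powers of $\alpha$ coupled with the a priori confinement of the trajectories. The estimates are only meaningful because the potentially dangerous $\alpha$-factors, coming from $\nabla R$ evaluated at $\alpha h$, are systematically defeated by one extra factor of $1/\alpha$, supplied either by the $1/\alpha$ prefactor in the flow or by the quadratic smallness $u(t)^2 = O(1/\alpha^2)$ of the displacement; getting this cancellation right in every term is what keeps the constants uniform in $\alpha$. The confinement is not automatic, since the bounds $M, L_h$ hold only locally, so the continuity/bootstrapping argument guaranteeing that both flows remain in $\bar B(w_0,r)$ for all large $\alpha$ must be in place before any of the Gr\"onwall estimates can be invoked rigorously.
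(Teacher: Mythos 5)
Your proof is correct, and it reaches all three bounds by a route that differs from the paper's in two respects. First, for the displacement bound the paper does not use Gr\"onwall at all: it combines Cauchy--Schwarz with the energy dissipation identity $\frac{d}{dt}F_\alpha(w_\alpha(t)) = -\Vert\nabla F_\alpha(w_\alpha(t))\Vert^2$ to get $\sup_{t\le T}\Vert w_\alpha(t)-w_0\Vert \le (T\,F_\alpha(w_0))^{1/2} = \sqrt{T R(0)}/\alpha$ in one line, which makes the confinement in a fixed ball essentially automatic for large $\alpha$; your Gr\"onwall-plus-bootstrap argument gives the same $O(1/\alpha)$ conclusion but with a constant that is exponential in $T$ rather than $\sqrt{T}$ (irrelevant for the stated $O(\cdot)$ claims, but worth noting). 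Second, you reverse the order of the last two estimates: the paper works in $\Ff$ first, writing the dynamics of $y=\alpha h(w_\alpha)$ and $\bar y=\alpha\bar h(\bar w_\alpha)$ through the tangent kernel $\Sigma(w)=Dh(w)Dh(w)^\intercal$, obtains $\Vert y-\bar y\Vert=O(1/\alpha)$ by Gr\"onwall, and only then integrates the parameter-space gap (whose derivative is then directly $O(1/\alpha^2)$, with no Gr\"onwall needed); you instead close the Gr\"onwall loop on the parameter gap $e(t)$ first, using the second-order Taylor bound $\Vert h(w_\alpha)-\bar h(w_\alpha)\Vert\le\tfrac{L_h}{2}\Vert w_\alpha-w_0\Vert^2=O(1/\alpha^2)$ in place of the kernel decomposition, and then read off the $\Ff$-space bound as a corollary. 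Your route buys a slightly more self-contained derivation of the $O(1/\alpha^2)$ parameter bound (and in fact yields the marginally stronger intermediate statement $\Vert h(w_\alpha)-\bar h(\bar w_\alpha)\Vert=O(1/\alpha^2)$), while the paper's kernel formulation is the one that generalizes to the uniform-in-time results of Theorem~\ref{th:over} via Lemma~\ref{lem:stability}. Your accounting of the powers of $\alpha$ and the localization of the Lipschitz constants are both handled correctly.
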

For supervised machine learning problems, the bound on $\Vert w_\alpha(t)-\bar w_\alpha(t)\Vert$ implies that $\alpha h(w_\alpha(T))$ also \emph{generalizes} like $\alpha \bar h(\bar w_\alpha(T))$ outside of the training set for large $\alpha$, see Appendix~\ref{subsec:generalization}. Note that the generalization behavior of linear models has been widely studied, and is particularly well understood for random feature models~\cite{rahimi2008random}, which are recovered when linearizing two layer neural networks, see Appendix~\ref{app:randomfeature}. It is possible to track the constants in Theorem~\ref{th:generallazy} but they would depend exponentially on the time horizon $T$. This exponential dependence can however be discarded for the specific case of the square loss, where we recover the \emph{scale} criterion informally derived in Section~\ref{sec:range}.
\begin{theorem}[Square loss, quantitative] \label{th:square} Consider the square loss $R(y) =\frac12\Vert y-y^\star\Vert^2$ for some $y^\star\in \Ff$ and assume that for some (potentially small) $r>0$,  $h$ is $\Lip(h)$-Lipschitz and $Dh$ is $\Lip(Dh)$-Lipschitz on the ball of radius $r$ around $w_0$.  Then for an iteration number $K>0$ and corresponding time $T \coloneqq K/\Lip(h)^2$, it holds
\[
\frac{\Vert \alpha h(w_\alpha(T)) - \alpha \bar h(\bar w_\alpha(T))\Vert}{\Vert \alpha h(w_0) -y^\star\Vert} \leq \frac{K^2}{\alpha} \frac{\Lip(Dh)}{ \Lip(h)^2} \Vert \alpha h(w_0) -y^\star\Vert
\]
as long as $\alpha\geq K\Vert \alpha h(w_0)-y^\star\Vert/(r\Lip(h))$.
\end{theorem}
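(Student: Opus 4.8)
The plan is to track the model \emph{residuals} rather than the parameters. Since for the square loss $\nabla R(y)=y-y^\star$, setting $M\coloneqq \Vert \alpha h(w_0)-y^\star\Vert$ and writing $e_\alpha(t)\coloneqq \alpha h(w_\alpha(t))-y^\star$ and $\bar e_\alpha(t)\coloneqq \alpha\bar h(\bar w_\alpha(t))-y^\star$, one differentiates \eqref{eq:lazyGF}--\eqref{eq:linearizedGF} in time to obtain the closed equations $e'_\alpha=-K_t\,e_\alpha$ and $\bar e'_\alpha=-K_0\,\bar e_\alpha$, where $K_t\coloneqq Dh(w_\alpha(t))Dh(w_\alpha(t))^\intercal$ and $K_0\coloneqq Dh(w_0)Dh(w_0)^\intercal$ are the positive semi-definite tangent-kernel operators on $\Ff$. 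This positivity is the structural fact that removes the exponential-in-$T$ factor of Theorem~\ref{th:generallazy}: because $\langle e_\alpha,K_t e_\alpha\rangle\geq 0$, the residual norms are non-increasing, so $\Vert e_\alpha(t)\Vert,\Vert \bar e_\alpha(t)\Vert\leq M$ throughout. Note that $\Vert\alpha h(w_\alpha(T))-\alpha\bar h(\bar w_\alpha(T))\Vert=\Vert e_\alpha(T)-\bar e_\alpha(T)\Vert$ is exactly the quantity to bound.

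First I would control how far the two flows travel. From \eqref{eq:lazyGF} and $\Vert Dh\Vert\leq \Lip(h)$ on the ball of radius $r$ (valid since $h$ is $\Lip(h)$-Lipschitz there), one gets $\Vert w'_\alpha(t)\Vert\leq \frac1\alpha\Lip(h)\Vert e_\alpha(t)\Vert\leq \frac1\alpha\Lip(h)M$, hence $\Vert w_\alpha(t)-w_0\Vert\leq \frac{t}{\alpha}\Lip(h)M$, and identically for $\bar w_\alpha$. A short continuation (bootstrap) argument then shows the hypothesis $\alpha\geq KM/(r\Lip(h))$ forces $\Vert w_\alpha(t)-w_0\Vert\leq r$ for all $t\leq T=K/\Lip(h)^2$: if the path first exited the ball at some $\tau\leq T$ then $r=\Vert w_\alpha(\tau)-w_0\Vert\leq \frac{\tau}{\alpha}\Lip(h)M<r$, a contradiction. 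Thus the local Lipschitz bounds are legitimately in force along the whole trajectory of both flows.

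The heart of the argument is the difference $\delta(t)\coloneqq e_\alpha(t)-\bar e_\alpha(t)$, which satisfies $\delta(0)=0$ (since $\bar h(w_0)=h(w_0)$) and, after rearranging, $\delta'=-K_t\,\delta-(K_t-K_0)\bar e_\alpha$. Testing against $\delta$ and \emph{discarding} the favourable term $-\langle\delta,K_t\delta\rangle\leq 0$ gives $\frac{\d}{\d t}\Vert\delta\Vert\leq \Vert K_t-K_0\Vert\,\Vert\bar e_\alpha\Vert\leq M\Vert K_t-K_0\Vert$, whence $\Vert\delta(T)\Vert\leq M\int_0^T\Vert K_t-K_0\Vert\,\d t$. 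For the integrand I would use the factorization $AA^\intercal-BB^\intercal=A(A-B)^\intercal+(A-B)B^\intercal$ with $A=Dh(w_\alpha(t))$ and $B=Dh(w_0)$, yielding $\Vert K_t-K_0\Vert\leq 2\Lip(h)\Vert Dh(w_\alpha(t))-Dh(w_0)\Vert\leq 2\Lip(h)\Lip(Dh)\Vert w_\alpha(t)-w_0\Vert\leq \frac{2t}{\alpha}\Lip(h)^2\Lip(Dh)M$.

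Finally I would integrate: $\int_0^T\frac{2t}{\alpha}\Lip(h)^2\Lip(Dh)M\,\d t=\frac{T^2}{\alpha}\Lip(h)^2\Lip(Dh)M$, and substituting $T=K/\Lip(h)^2$ collapses the prefactor to $\frac{K^2}{\alpha}\frac{\Lip(Dh)}{\Lip(h)^2}M$. Carrying along the outer factor $M$ and then dividing by $M=\Vert\alpha h(w_0)-y^\star\Vert$ recovers exactly the claimed estimate. The two genuinely delicate points are (i) the bootstrap that confines both flows to the ball where the Lipschitz constants apply, and (ii) recognizing that the semi-definiteness of $K_t$ allows dropping the $-K_t\delta$ term in the energy inequality; without it one would only get a Gr\"onwall bound carrying an $e^{\Lip(h)^2T}$ factor, precisely the exponential dependence this theorem is designed to avoid.
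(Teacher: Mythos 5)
Your proof is correct and follows essentially the same route as the paper's: the exit-time bootstrap that confines $w_\alpha$ to the ball of radius $r$ under the stated condition on $\alpha$, the monotonicity of the residual norm coming from the positive semi-definiteness of the tangent kernel, and the bound $\Vert K_t-K_0\Vert\leq 2\Lip(h)\Lip(Dh)\Vert w_\alpha(t)-w_0\Vert$ integrated against the linear-in-$t$ displacement estimate. The only (immaterial) difference is the grouping of the error equation: you write $\delta'=-K_t\delta-(K_t-K_0)\bar e_\alpha$ and drop the $K_t$ term, whereas the paper applies $(K_t-K_0)$ to the non-linear residual and drops the $K_0$ term; both retained factors are bounded by the same $M$, so the constants agree.
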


We can make the following observations:
\begin{itemize}
\item For the sake of interpretability, we have introduced a quantity $K$, analogous to an iteration number, that accounts for the fact that the gradient flow needs to be integrated with a step-size of order  $1/\Lip(\nabla F_\alpha)= 1/\Lip(h)^2$. For instance, with this step-size, gradient descent at iteration $K$ approximates the gradient flow at time $T=K/\Lip(h)^2$, see, e.g.,~\cite{gautschi1997numerical,scieur2017integration}.
\item Laziness only depends on the local properties of $h$ around $w_0$. These properties may vary a lot over the parameter space, as is the case for homogeneous functions seen in Section~\ref{sec:range}.
\end{itemize}

For completeness, similar bounds on $\Vert w_\alpha(T) -w_0\Vert$ and $\Vert w_\alpha(T)-\bar w_\alpha(T)\Vert$ are also provided in Appendix~\ref{app:squareproof}. The drawback of the bounds in this section is the increasing dependency in time, which is removed in the next section. Yet, the relevance of Theorem~\ref{th:generallazy} remains because it does not depend on the conditioning of the problem. Although the bound grows as $K^2$, it gives an informative estimate for large or ill-conditioned problems, where training is typically stopped much before convergence.

\subsection{Uniform bounds and convergence in the lazy regime}
This section is devoted to uniform bounds in time and convergence results under the assumption that $R$ is strongly convex. In this setting, the function $\bar F_\alpha$ is strictly convex on the affine hyperspace $w_0 + \ker Dh(w_0)^\perp$ which contains the linearized gradient flow $(\bar w_\alpha(t))_{t\geq 0}$, so the latter converges linearly to the unique global minimizer of $\bar F_\alpha$. In particular, if $h(w_0)=0$ then this global minimizer does not depend on $\alpha$ and $\sup_{t\geq 0} \Vert \bar w_\alpha(t)-w_0\Vert = O(1/\alpha)$. We will see in this part how these properties reflect on the gradient flow $w_\alpha(t)$ for large $\alpha$.

\paragraph{Over-parameterized case.}
The following proposition shows global convergence of lazy training under the condition that $Dh(w_0)$ is surjective. As $\rank Dh(w_0)$ gives the number of effective parameters or degrees of freedom of the model around $w_0$, this over-parameterization assumption guarantees that any model around $h(w_0)$ can be fitted. Of course, this can only happen if $\Ff$ is finite-dimensional.

\begin{theorem}[Over-parameterized lazy training]\label{th:over}
Consider a $M$-smooth and $m$-strongly convex loss $R$ with minimizer $y^\star$ and condition number $\kappa \coloneqq M/m$. Assume that $\sigma_{\min}$, the smallest singular value of $Dh(w_0)^\intercal$ is positive and that the initialization satisfies $\Vert h(w_0)\Vert \leq C_0 \coloneqq \sigma_{\min}^3/( 32 \kappa^{3/2} \Vert D h(w_0)\Vert \Lip(Dh))$ where $\Lip(Dh)$ is the Lipschitz constant of $Dh$. If $\alpha > \Vert y^*\Vert/C_0$, then for $t\geq 0$, it holds
\[
\Vert \alpha h(w_\alpha(t))-y^* \Vert \leq \sqrt{\kappa}\Vert \alpha h(w_0) - y^*\Vert \exp(- m \sigma_{\min}^2 t/4).
\]
If moreover $h(w_0)=0$,  it holds as $\alpha\to \infty$, $\sup_{t\geq 0} \Vert w_\alpha(t) - w_0 \Vert = O(1/\alpha)$,
\begin{align*}
\sup_{t\geq 0} \Vert \alpha h(w_\alpha(t)) - \alpha \bar h(\bar w_\alpha(t)) \Vert = O(1/\alpha)
&&
\text{and}
&&
\sup_{t\geq 0} \Vert w_\alpha(t) - \bar w_\alpha(t) \Vert = O(\log \alpha /\alpha^2).
\end{align*}
\end{theorem}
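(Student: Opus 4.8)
The plan is to follow the scaled output $y_\alpha(t)\coloneqq\alpha h(w_\alpha(t))$ rather than the parameters. Differentiating \eqref{eq:lazyGF} gives the closed dynamics
\[
y_\alpha'(t) = -K(w_\alpha(t))\,\nabla R(y_\alpha(t)),\qquad K(w)\coloneqq Dh(w)Dh(w)^\intercal,
\]
where $K(w)$ is the positive semi-definite tangent kernel and, by definition of $\sigma_{\min}$, one has $K(w_0)\succeq\sigma_{\min}^2\Id$. Everything hinges on keeping $w_\alpha(t)$ in a ball where $K$ stays well conditioned: since $Dh$ is $\Lip(Dh)$-Lipschitz, Weyl's inequality gives $\sigma_{\min}(Dh(w)^\intercal)\geq\sigma_{\min}/2$, hence $K(w)\succeq\tfrac14\sigma_{\min}^2\Id$, as soon as $\Vert w-w_0\Vert\leq r_0\coloneqq\sigma_{\min}/(2\Lip(Dh))$.

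First I would prove the convergence estimate assuming $w_\alpha$ stays in that ball. Writing $\Delta(t)\coloneqq R(y_\alpha(t))-R(y^\star)$ and using the Polyak--\L{}ojasiewicz inequality $\Vert\nabla R\Vert^2\geq 2m\Delta$ (which holds for $m$-strongly convex $R$), the conditioning of $K$ yields $\Delta'(t)=-\langle\nabla R(y_\alpha),K(w_\alpha)\nabla R(y_\alpha)\rangle\leq-\tfrac14\sigma_{\min}^2\Vert\nabla R(y_\alpha)\Vert^2\leq-\tfrac12 m\sigma_{\min}^2\Delta(t)$. Grönwall then gives exponential decay of $\Delta$, and sandwiching $\Delta$ between $\tfrac m2\Vert\cdot\Vert^2$ and $\tfrac M2\Vert\cdot\Vert^2$ produces the stated bound with rate $m\sigma_{\min}^2/4$ and prefactor $\sqrt\kappa$.

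The heart of the proof, and the main obstacle, is the continuation argument that justifies the ball assumption for all $t\geq0$. Let $\tau$ be the first exit time from the ball of radius $r_0$. On $[0,\tau)$ the previous estimate applies, so $\Vert\nabla R(y_\alpha(t))\Vert\leq M\Vert y_\alpha(t)-y^\star\Vert\leq M\sqrt\kappa\Vert\alpha h(w_0)-y^\star\Vert e^{-m\sigma_{\min}^2 t/4}$. Integrating the velocity bound $\Vert w_\alpha'\Vert\leq\tfrac1\alpha\Vert Dh(w_\alpha)\Vert\Vert\nabla R(y_\alpha)\Vert$, with $\Vert Dh(w_\alpha)\Vert\leq\tfrac32\Vert Dh(w_0)\Vert$ on the ball, gives $\sup_{t<\tau}\Vert w_\alpha(t)-w_0\Vert\lesssim \Vert Dh(w_0)\Vert\kappa^{3/2}\Vert\alpha h(w_0)-y^\star\Vert/(\alpha\sigma_{\min}^2)$. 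The definition of $C_0$ together with $\Vert h(w_0)\Vert\leq C_0$ and $\alpha>\Vert y^\star\Vert/C_0$ forces $\Vert\alpha h(w_0)-y^\star\Vert\leq 2\alpha C_0$, and the numerical constant $32$ in $C_0$ is chosen precisely so that this bound stays strictly below $r_0$. Hence $w_\alpha$ cannot reach the boundary, $\tau=\infty$, and the estimate holds globally; this also yields $\sup_t\Vert w_\alpha(t)-w_0\Vert=O(1/\alpha)$ once $h(w_0)=0$ makes $\Vert\alpha h(w_0)-y^\star\Vert=\Vert y^\star\Vert=O(1)$.

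For the comparison statements assume $h(w_0)=0$. The linearized output $\bar y_\alpha(t)\coloneqq\alpha\bar h(\bar w_\alpha(t))$ solves the constant-kernel flow $\bar y_\alpha'=-K(w_0)\nabla R(\bar y_\alpha)$ with $\bar y_\alpha(0)=0$, so it is independent of $\alpha$ and converges linearly. To bound $e(t)\coloneqq y_\alpha(t)-\bar y_\alpha(t)$ I would use the weighted Lyapunov function $V\coloneqq\langle e,K(w_0)^{-1}e\rangle$ (well defined since $K(w_0)$ is invertible, $\Ff$ being finite-dimensional): its derivative splits into a contraction term $-\langle e,\nabla R(y_\alpha)-\nabla R(\bar y_\alpha)\rangle\leq-m\Vert e\Vert^2$ and a perturbation term controlled by $\Vert K(w_\alpha)-K(w_0)\Vert\cdot\Vert\nabla R(y_\alpha)\Vert$. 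Since $\Vert K(w_\alpha(t))-K(w_0)\Vert\lesssim\Lip(Dh)\Vert Dh(w_0)\Vert\Vert w_\alpha(t)-w_0\Vert=O(1/\alpha)$ and $\Vert\nabla R(y_\alpha(t))\Vert=O(e^{-m\sigma_{\min}^2 t/4})$, a Grönwall estimate gives $\sup_t\Vert e(t)\Vert=O(1/\alpha)$, which is the second displayed bound. Finally, integrating $w_\alpha'-\bar w_\alpha'=-\tfrac1\alpha[(Dh(w_\alpha)-Dh(w_0))^\intercal\nabla R(y_\alpha)+Dh(w_0)^\intercal(\nabla R(y_\alpha)-\nabla R(\bar y_\alpha))]$ in time, the first bracket contributes $O(1/\alpha^2)$ directly, while the second is bounded by $\tfrac1\alpha M\Vert Dh(w_0)\Vert\int_0^t\Vert e\Vert$; splitting this integral at $T^\star\sim(\log\alpha)/(m\sigma_{\min}^2)$ — using the uniform bound $\Vert e\Vert=O(1/\alpha)$ before $T^\star$ and the exponential smallness of both gradients afterwards — produces the extra $\log\alpha$ factor and the claimed $O(\log\alpha/\alpha^2)$.
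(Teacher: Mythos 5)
Your proposal is correct and follows essentially the same route as the paper: linear convergence of $y_\alpha=\alpha h(w_\alpha)$ via positivity of the tangent kernel on the ball of radius $\sigma_{\min}/(2\Lip(Dh))$ combined with an exit-time argument calibrated by $C_0$, a weighted Lyapunov function $\langle e, K(w_0)^{-1}e\rangle$ for the output comparison (this is exactly the paper's stability lemma, written there as $\tfrac12\Vert\Sigma_0^{-1/2}(y-\bar y)\Vert^2$), and the $\log\alpha$ time-splitting for the $O(\log\alpha/\alpha^2)$ parameter bound. The only differences are cosmetic (PL inequality phrasing of the strong-convexity decay, a Grönwall estimate in place of the paper's invariant-region argument for the perturbed Lyapunov ODE, and slightly different intermediate constants).
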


The proof of this result relies on the fact that $\alpha h(w_\alpha(t))$ follows the gradient flow of $R$ in a time-dependent and non degenerate metric: the pushforward metric~\cite{lee2003smooth} induced by~$h$ on $\Ff$. For the first part, we do not claim improvements over~\cite{du2018gradient, li2018learning, du2018gradientdeep, allenzhu2018convergence, zou2018sgdoptimizes}, where a lot of effort is also put in dealing with the non-smoothness of $h$, which we do not study here. As for the uniform in time comparison with the tangent gradient flow, it is new and follows mostly from Lemma~\ref{lem:stability} in Appendix~\ref{app:proofs} where the constants are given and depend polynomially on the characteristics of the problem.

\paragraph{Under-parameterized case.}
We now remove the over-parameterization assumption and show again linear convergence for large values of $\alpha$. This covers in particular the case of population loss minimization, where $\Ff$ is infinite-dimensional. For this setting, we limit ourselves to a qualitative statement\footnote{In contrast to the finite horizon bound of Theorem~\ref{th:square}, quantitative statements would here involve the smallest positive singular value of $Dh(w_0)$, which is anyways hard to control.}.

\begin{theorem}[Under-parameterized lazy training]\label{th:under} Assume that $\Ff$ is separable, $R$ is strongly convex, $h (w_0)=0$ and $\rank Dh(w)$ is constant on a neighborhood of $w_0$. Then there exists $\alpha_0>0$ such that for all $\alpha\geq \alpha_0$ the gradient flow~\eqref{eq:lazyGF} converges at a geometric rate (asymptotically independent of $\alpha$) to a local minimum of~$F_\alpha$.
\end{theorem}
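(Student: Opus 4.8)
The plan is to track the \emph{rescaled output} $y_\alpha(t)\coloneqq \alpha h(w_\alpha(t))$, which starts at $y_\alpha(0)=\alpha h(w_0)=0$ and, differentiating \eqref{eq:lazyGF}, solves $y_\alpha' = -M_\alpha(t)\nabla R(y_\alpha)$ with the time-dependent pushforward metric $M_\alpha(t)\coloneqq Dh(w_\alpha(t))Dh(w_\alpha(t))^\intercal$ already used for Theorem~\ref{th:over}. The constant rank hypothesis is exactly what makes the under-parameterized case tractable: writing $r=\rank Dh(w_0)$ and $V\coloneqq \im Dh(w_0)$, the constant rank theorem provides a neighborhood $U$ of $w_0$ on which $\Mm\coloneqq h(U)$ is an embedded $r$-dimensional submanifold of $\Ff$ with $0\in \Mm$ and $T_0\Mm=V$ (separability of $\Ff$ being used to set up the image manifold and the orthogonal projections below). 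On $U$ the operator $M_\alpha$ has constant rank $r$, its image is the tangent space $\im Dh(w_\alpha)$, and its smallest positive eigenvalue equals $\sigma_{\min}(w_\alpha)^2$, the squared smallest positive singular value of $Dh(w_\alpha)$, which stays bounded away from $0$ while $w_\alpha$ remains close to $w_0$.

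First I would quantify the near-flatness of the rescaled image. For $\|w-w_0\|=O(1/\alpha)$ a Taylor expansion gives $\alpha h(w)=\alpha Dh(w_0)(w-w_0)+O(1/\alpha)$, so the portion of $\alpha\Mm$ visited by the flow is an $O(1/\alpha)$ perturbation of $V$ over an $O(1)$ extent. Hence in a chart the restriction $R|_{\alpha\Mm}$ is a vanishing perturbation of $R|_V$, which is $m$-strongly convex; for $\alpha$ large it is $(m/2)$-strongly convex, and its unique local minimizer $y_\alpha^\star$ lies within $O(1/\alpha)$ of $y_V^\star\coloneqq \arg\min_{y\in V}R(y)$. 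I would then run a Lyapunov argument on $E_\alpha(t)\coloneqq R(y_\alpha(t))-R(y_\alpha^\star)$. Since $\frac{d}{dt}E_\alpha=-\langle \nabla R(y_\alpha),M_\alpha\nabla R(y_\alpha)\rangle \le -\sigma_{\min}(w_\alpha)^2\,\|P_{\im Dh(w_\alpha)}\nabla R(y_\alpha)\|^2$ and the projected gradient $P_{\im Dh(w_\alpha)}\nabla R(y_\alpha)$ is precisely the gradient of $R|_{\alpha\Mm}$, the Polyak--{\L}ojasiewicz inequality for strongly convex functions yields $\frac{d}{dt}E_\alpha\le -c_\alpha E_\alpha$ with $c_\alpha\to 2m\,\sigma_{\min}(w_0)^2$ as $\alpha\to\infty$. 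This is the announced geometric convergence at a rate asymptotically independent of $\alpha$.

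The confinement of $w_\alpha(t)$ to $U$ has to be obtained simultaneously, and this bootstrap is the main obstacle: the manifold structure and the metric bounds are needed to prove convergence, yet it is the convergence that keeps the trajectory near $w_0$. On the maximal interval $[0,T^\star)$ where $w_\alpha$ stays in $U$ the estimates above hold, and because only the tangential part of $\nabla R$ survives in $Dh(w_\alpha)^\intercal\nabla R(y_\alpha)$, they give $\|w_\alpha'(t)\|\le \frac1\alpha\|Dh(w_\alpha)\|\,\|P_{\im Dh(w_\alpha)}\nabla R(y_\alpha)\|\lesssim \frac1\alpha\sqrt{E_\alpha(t)}\lesssim \frac1\alpha e^{-c_\alpha t/2}$. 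Integrating, $\sup_{t<T^\star}\|w_\alpha(t)-w_0\|=O(1/\alpha)$, which for large $\alpha$ keeps the trajectory strictly inside $U$; by continuity $T^\star=\infty$, closing the bootstrap and proving $\sup_{t\ge 0}\|w_\alpha(t)-w_0\|=O(1/\alpha)$. The same integrability shows $w_\alpha(t)$ converges geometrically to a limit $w_\alpha^\infty\in U$ with $\nabla F_\alpha(w_\alpha^\infty)=0$. Finally, since $F_\alpha(w)=\frac{1}{\alpha^2}R(\alpha h(w))$ depends on $w$ only through $h(w)\in\Mm$, and $R|_{\alpha\Mm}$ attains a local minimum at $y_\alpha^\star=\alpha h(w_\alpha^\infty)$, the limit $w_\alpha^\infty$ is a local minimizer of $F_\alpha$, which completes the proof.
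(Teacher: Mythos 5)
Your proposal is correct and follows essentially the same route as the paper: the constant rank theorem supplies the image manifold, the rescaled restriction of $R$ becomes strongly convex for large $\alpha$ because $\alpha\Mm$ flattens onto $\im Dh(w_0)$, a PL/Gr\"onwall argument (the paper's Lemma~\ref{lem:stronglyconvexGF}) gives the geometric rate with the lower-bounded smallest positive singular value of $Dh$ playing the role of $\lambda$, and the $O(1/\alpha)$ bound on $\Vert w_\alpha(t)-w_0\Vert$ closes the confinement bootstrap. The only difference is cosmetic: you work intrinsically on $\alpha\Mm$ with orthogonal projections onto $\im Dh(w_\alpha)$, whereas the paper straightens the manifold to the flat $\Pi_r$ via the rank-theorem chart $\psi$ and runs the identical argument on $G_\alpha(g)=R(\alpha\psi^{-1}(g))/\alpha^2$.
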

Thanks to lower-semicontinuity of the rank function, the assumption that the rank is locally constant holds generically, in the sense that it is satisfied on an open dense subset of $\RR^p$. In this under-parameterized case, the limit $\lim_{t\to\infty} w_\alpha(t)$ is for $\alpha$ large enough a strict local minimizer, but in general not a global minimizer of $F_\alpha$ because the image of $Dh(w_0)$ does not \emph{a priori} contain the global minimizer of $R$. Thus it cannot be excluded that there exists parameters~$w$ farther from $w_0$ with a smaller loss. This fact is clearly observed experimentally in Section~\ref{sec:numerics}, Figure~\ref{fig:losses}-(b). Finally, a comparison with the linearized gradient flow as in Theorem~\ref{th:over} could be shown along the same lines, but would be technically slightly more involved because differential geometry comes into play.
 
\paragraph{Relationship to the global convergence result in~\cite{chizat2018global}.}
A consequence of Theorem~\ref{th:under} is that in the lazy regime, the gradient flow of the population risk for a two-layer neural network might get stuck in a local minimum. In contrast, it is shown in~\cite{chizat2018global} that such gradient flows converge to global optimality in the infinite over-parameterization limit $p\to \infty$ if initialized with enough diversity in the weights. This is not a contradiction since Theorem~\ref{th:under} assumes a finite number $p$ of parameters. In the lazy regime, the population loss might also converge to its minimum when $p$ increases: this is guaranteed if the tangent kernel $Dh(w_0)Dh(w_0)^\intercal $~\cite{jacot2018neural} converges (after normalization) to a universal kernel as $p\to\infty$. However, this convergence might be unreasonably slow in high-dimension, as  Figure~\ref{fig:cover}-(c) suggests. As a side note, we stress that the global convergence result in~\cite{chizat2018global} is not limited to lazy dynamics but also covers non-linear dynamics, such as seen on Figure~\ref{fig:cover} where neurons move.


\section{Numerical Experiments}
\label{sec:numerics}

We realized two sets of experiments, the first with two-layer neural networks conducted on synthetic data and the second with convolutional neural networks (CNNs) conducted on the CIFAR-10 dataset~\cite{krizhevsky2009learning}. The code to reproduce these experiments is available online\footnote{\url{https://github.com/edouardoyallon/lazy-training-CNN}}.
 
\subsection{Two-layer neural networks in the teacher-student setting}\label{sec:synthexpe}
We consider the following two-layer \emph{student} neural network $h_m(w) = f_m(w,\cdot)$ with $f_m(w,x) =  \sum_{j=1}^m a_j \max(b_j \cdot x,0)$ where $a_j \in \RR$ and $b_j \in \RR^d$ for $j=1,\dots,m$. It is trained to minimize the square loss with respect to the output of a two-layer \emph{teacher} neural network with same architecture and $m_0=3$ hidden neurons, with random weights normalized so that $\Vert a_j b_j \Vert = 1$ for $j\in \{1,2,3\}$. For the student network, we use random Gaussian weights, except when \emph{symmetrized initialization} is mentioned, in which case we use random Gaussian weights for $j\leq m/2$ and set for $j>m/2$, $b_j =b_{j-m/2}$ and $a_j=-a_{j-m/2}$. This amounts to training a model of the form $h(w_a,w_b)=h_{m/2}(w_a)-h_{m/2}(w_b)$ with $w_a(0)=w_b(0)$ and guaranties zero output at initialization. The training data are $n$ input points uniformly sampled on the unit sphere in $\RR^d$ and we minimize the empirical risk, except for Figure~\ref{fig:SGD}(b) where we directly minimize the population risk with Stochastic Gradient Descent (SGD).

\begin{figure}
\centering
\begin{subfigure}{0.5\linewidth}
\centering
\includegraphics[scale=0.43]{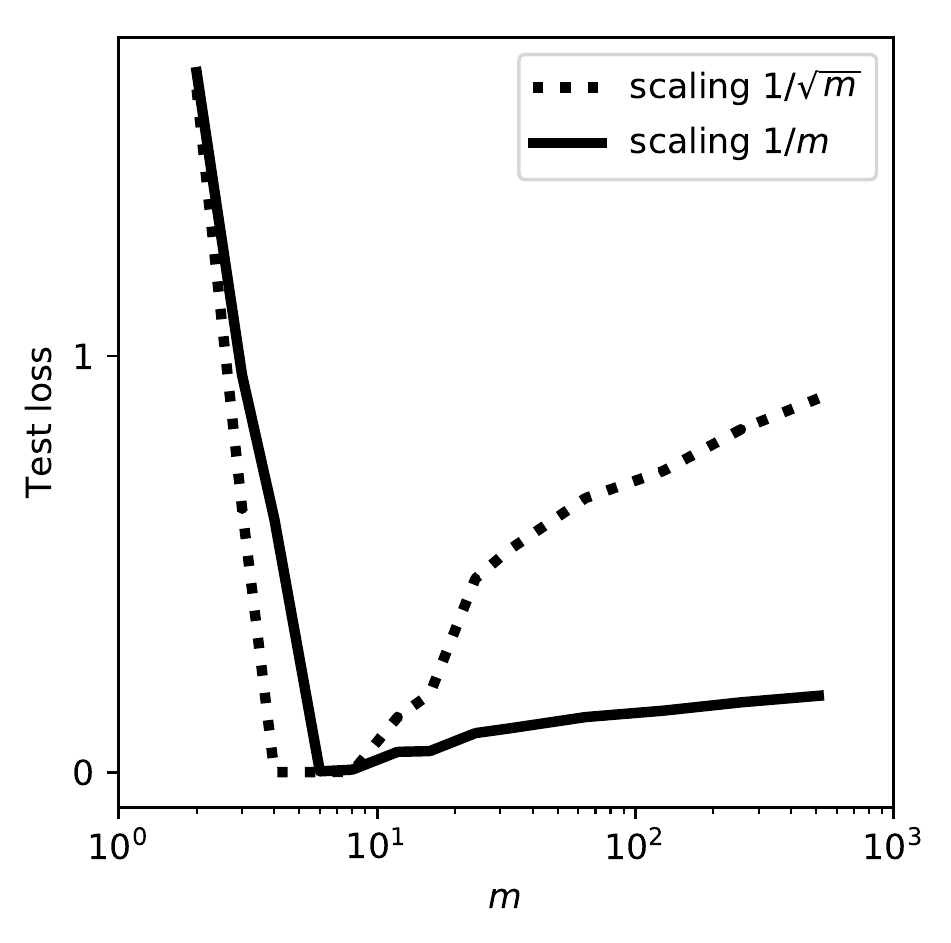}
\caption{}\label{fig:testm}
\end{subfigure}%
\begin{subfigure}{0.5\linewidth}
\centering
\includegraphics[scale=0.43]{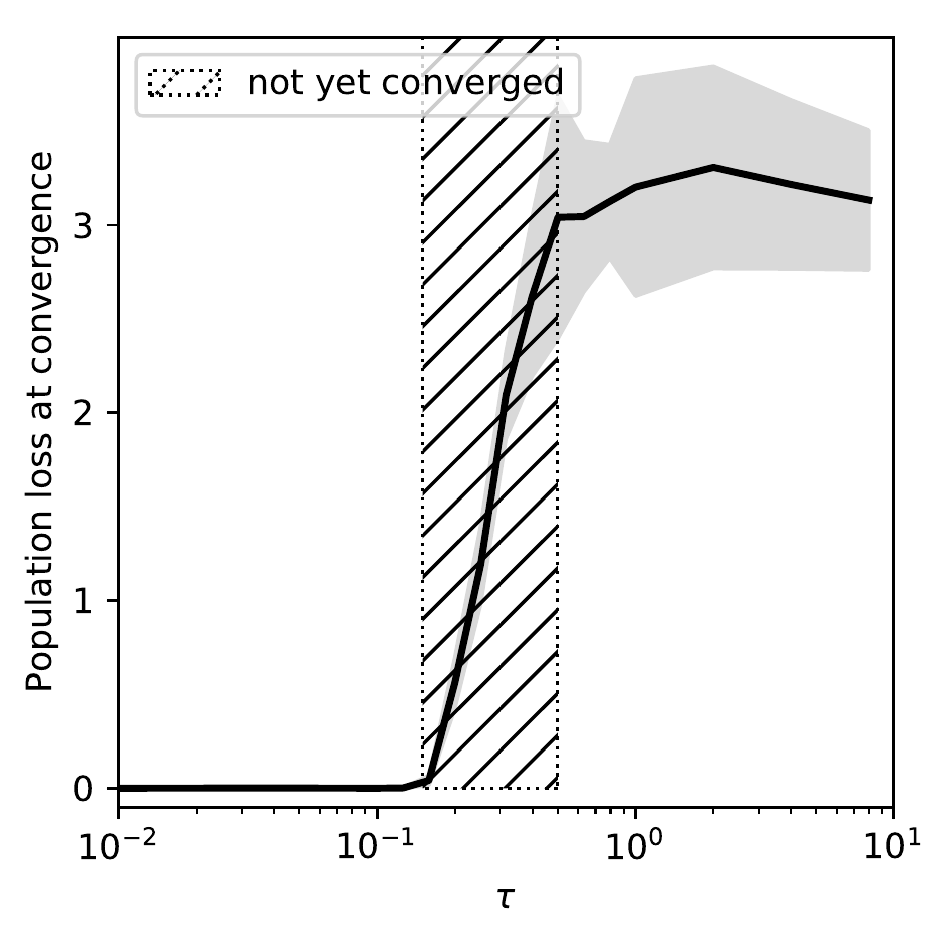}
\caption{}\label{fig:SGD}
\end{subfigure}
\caption{(a) Test loss at convergence for gradient descent, when $\alpha$ depends on $m$ as $\alpha = 1/m$ or $\alpha=1/\sqrt{m}$, the latter leading to lazy training for large $m$ (not symmetrized). (b) Population loss at convergence versus $\tau$ for SGD with a random $\Nn(0,\tau^2)$ initialization (symmetrized). In the hatched area the loss was still slowly decreasing.}
\label{fig:losses}
\end{figure}

\paragraph{Cover illustration.} Let us detail the setting of Figure~\ref{fig:cover} in Section~\ref{sec:intro}. Panels (a)-(b) show gradient descent dynamics with $n=15, m=20$ with symmetrized initialization (illustrations with more neurons can be found in Appendix~\ref{app:experiments}). To obtain a $2$-D representation, we plot $\vert a_j(t) \vert b_j(t)$ throughout training (lines) and at convergence (dots) for $j\in\{1,\dots,m\}$. The blue or red colors stand for the signs of $a_j(t)$ and the unit circle is displayed to help visualizing the change of scale.  On panel (c), we set $n=1000$, $m=50$ with symmetrized initialization and report the average and standard deviation of the test loss over $10$ experiments. To ensure that the bad performances corresponding to large $\tau$ are not due to a lack of regularization, we display also the best test error throughout training (for kernel methods, early stopping is a form of regularization~\cite{yao2007early}).

\paragraph{Increasing number of parameters.} Figure~\ref{fig:losses}-(a) shows the evolution of the test error when increasing $m$ as discussed in Section~\ref{sec:range}, \emph{without} symmetrized initialization. We report the results for two choices of scaling functions $\alpha(m)$, averaged over $5$ experiments with $d=100$. The scaling $1/\sqrt{m}$ leads to lazy training, with a poor generalization as $m$ increases, in contrast to the scaling $1/m$ for which the test error remains relatively close to $0$ for large $m$ (more experiments with this scaling can be found in~\cite{chizat2018global,rotskoff2018neural,mei2018mean}).

\paragraph{Under-parameterized case.} Finally, Figure~\ref{fig:losses}-(b) illustrates the under-parameterized case, with $d=100, m=50$ with symmetrized initialization. We used SGD with batch-size 200 to minimize the population square loss, and displayed average and standard deviation of the final population loss (estimated with $2000$ samples) over $5$ experiments. As shown in Theorem~\ref{th:under}, SGD converges to a \emph{a priori} local minimum in the lazy regime (i.e., here for large $\tau$). In contrast, it behaves well when $\tau$ is small, as in Figure~\ref{fig:cover}. There is also an intermediate regime (hatched area) where convergence is very slow and the loss was still decreasing when the algorithm was stopped.

\subsection{Deep CNNs experiments}\label{sec:CNNexpe}
We now study whether lazy training is relevant to understand the good performances of convolutional neural networks (CNNs). 

\paragraph{Interpolating from standard to lazy training.}
We first study the effect of increasing the scale factor $\alpha$ on a standard pipeline for image classification on the CIFAR10 dataset.
We consider the VGG-11 model~\cite{simonyan2014very}, which is a widely used model on CIFAR10. We trained it via mini-batch SGD with a momentum parameter of $0.9$. For the sake of interpretability, no extra regularization (e.g., BatchNorm) is incorporated, since a simple framework that outperforms linear methods baselines with some margin is sufficient to our purpose (see Figure~\ref{fig:CNNs}(b)). An initial learning rate $\eta_0$ is linearly decayed at each epoch, following $\eta_t=\frac{\eta_0}{1+\beta t}$.  The biases are initialized with $0$ and all other weights are initialized with normal Xavier initialization~\cite{glorot2010understanding}. In order to set the initial output to $0$ we use the \emph{centered model} $h$, which consists in replacing the VGG model $\tilde h$ by $h(w) \coloneqq \tilde h(w)-\tilde h(w_0)$. Notice that this does not modify the differential at initialization.

The model $h$ is trained for the square loss multiplied by $1/\alpha^2$ (as in Section~\ref{sec:dynamics}), with standard data-augmentation, batch-size of 128 \cite{zagoruyko2016wide} and $\eta_0=1$ which gives the best test accuracies  over the grid $10^k$, $k\in \{-3,3\}$, for all $\alpha$. The total number of epochs is $70$, adjusted so that the performance reaches a plateau for $\alpha=1$. Figure \ref{fig:CNNs}(a) reports the accuracy after training $\alpha h$ for increasing values of $\alpha \in 10^k$ for $k=\{0,1,2,3,4,5,6,7\}$ ($\alpha=1$ being the standard setting).  For $\alpha<1$, the training loss diverges with $\eta_0=1$. We also report the \emph{stability of activations}, which is the share of neurons over ReLU layers that, after training, are activated for the same inputs than at initialization, see Appendix~\ref{app:experiments}. Values close to $100\%$ are strong indicators of an effective linearization.

 We observe a significant drop in performance as $\alpha$ grows, and then the accuracy reaches a plateau, suggesting that the CNN progressively reaches the lazy regime. This demonstrates that the linearized model (large $\alpha$) is not sufficient to explain the good performance of the model for $\alpha=1$. For large $\alpha$, we obtain a low limit training accuracy and do not observe overfitting, a surprising fact since this amounts to solving an over-parameterized linear system. This behavior is due to a poorly conditioned linearized model, see Appendix~\ref{app:experiments}.
 
 \paragraph{Performance of linearized CNNs.} In this second set of experiments, we investigate whether variations of the models trained above in a lazy regime could increase the performance and, in particular, could outperform other linear methods which also do not involve learning a representation~\cite{rahimi2008random,oyallon2015deep}.  To this end, we train widened CNNs in the lazy regime, as widening is a well-known strategy to boost performances of a given architecture~\cite{zagoruyko2016wide}. We multiply the number of channels of each layer by $8$ for the VGG model and $7$ for the ResNet model~\cite{he2016deep} (these values are limited by hardware constraints). We choose $\alpha=10^7$ to train the linearized models, a batch-size of 8 and,  after cross-validation, $\eta_0=0.01, 1.0$ for respectively the standard and  linearized model. We also multiply the initial weights by respectively $1.2$ and $1.3$ for the ResNet-18 and VGG-11, as we found that it slightly boosts the training accuracies. Each model is trained with the cross-entropy loss divided by $\alpha^2$ until the test accuracy stabilizes or increases, and we check that the average stability of activations (see Appendix~\ref{app:experiments}) was $100\%$.
 
As seen on Figure~\ref{fig:CNNs}(b), widening the VGG model slightly improves the performances of the linearized model compared to the previous experiment but there is still a substantial gap of performances from other non-learned representations \cite{recht2019imagenet,oyallon2015deep} methods, not to mention the even wider gap with their non-lazy counterparts. This behavior is also observed on the state-of-the-art ResNet architecture. Note that~\cite{arora2019exact} reports a test accuracy of $77.4\%$ without data augmentation for a linearized CNN with a specially designed architecture which in particular solves the issue of ill-conditioning. Whether variations of standard architectures and pipelines can lead to competitive performances with linearized CNNs, remains an open question.

\paragraph{Remark on wide NNs.} It was proved~\cite{jacot2018neural} that neural networks with standard initialization (random independent weights with zero mean and variance $O(1/n_\ell)$ at layer $\ell$, where $n_\ell$ is the size of the previous layer), are bound to reach the lazy regime as the sizes of all layers grow unbounded. Moreover, for very large neural networks of more than $2$ layers, this choice of initialization is essentially mandatory to avoid exploding or vanishing initial gradients~\cite{he2015delving,hanin2018start} if the weights are independent with zero mean. Thus we stress that we do not claim that wide neural networks do not show a lazy behavior, but rather that those which exhibit good performances are far from this asymptotic behavior.

\begin{figure}
\centering
\begin{subfigure}{0.4\linewidth}
\centering
\includegraphics[scale=0.6]{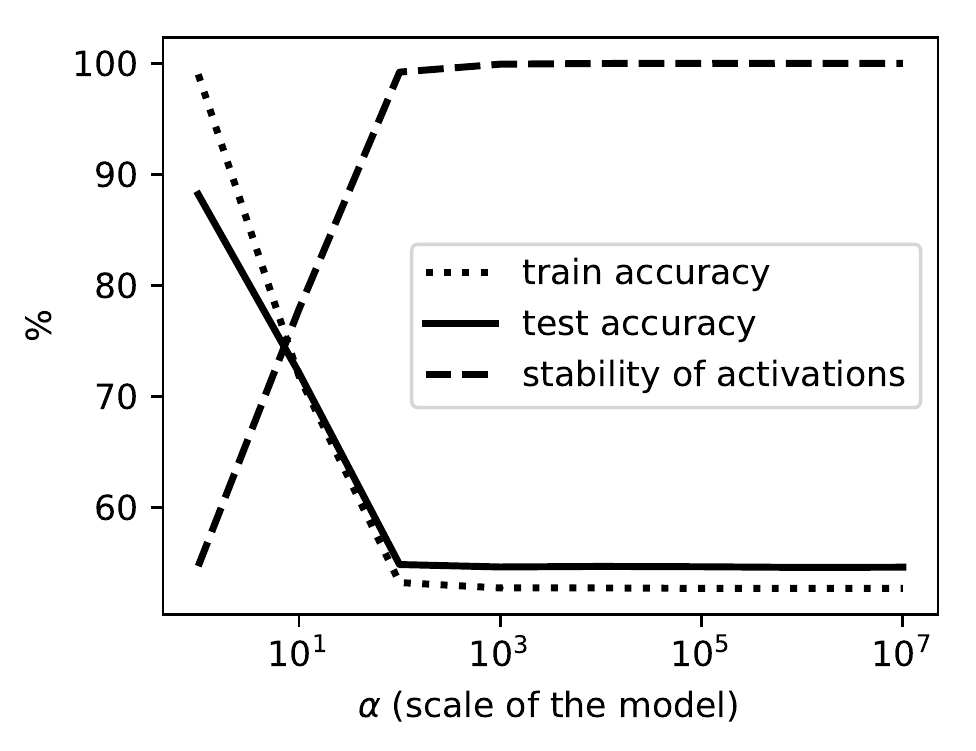}
\caption{}\label{fig:accvsalpha}
\end{subfigure}%
\begin{subfigure}{0.6\linewidth}
\centering
\begin{tabular}{lrr}
  \hline
  Model & Train acc. &  Test acc.\\
  \hline
 ResNet wide, linearized & 55.0 & 56.7\\
 VGG-11 wide, linearized & 61.0 & 61.7 \\
Prior features~\cite{oyallon2015deep}  &- & 82.3 \\
Random features~\cite{recht2019imagenet} & - & 84.2  \\
  VGG-11 wide, standard & 99.9  & 89.7 \\
 ResNet wide, standard & 99.4 & 91.0\\
  \hline
\end{tabular}
\caption{}\label{fig:perfwidelazy}
\end{subfigure}
\caption{(a) Accuracies on CIFAR10 as a function of the scaling $\alpha$. The stability of activations suggest a linearized regime when high. (b) Accuracies on CIFAR10 obtained for $\alpha=1$ (standard, non-linear) and $\alpha=10^7$ (linearized) compared to those reported for some linear methods without data augmentation: random features and prior features based on the scattering transform. }
\label{fig:CNNs}
\end{figure}

\section{Discussion}
Lazy training is an implicit bias phenomenon in differentiable programming, that refers to the situation when a non-linear parametric model behaves like a linear one.  This arises when the \emph{scale} of the model becomes large which, we have shown, happens implicitly under some choices of hyper-parameters governing normalization, initialization and number of iterations. While the lazy training regime provides some of the first optimization-related theoretical insights for deep neural networks~\cite{du2018gradientdeep, allenzhu2018convergence, zou2018sgdoptimizes,jacot2018neural}, we believe that it does not explain yet the many successes of neural networks that have been observed in various challenging, high-dimensional  tasks in machine learning. This is corroborated by numerical experiments where it is seen that the performance of networks trained in the lazy regime degrades and in particular does not exceed that of some classical linear methods. Instead, the intriguing phenomenon that still defies theoretical understanding is the one illustrated on Figure~\ref{fig:cover}(c) for small $\tau$ and on Figure~\ref{fig:CNNs}(a) for $\alpha=1$: neural networks trained with gradient-based methods (and neurons that move) have the ability to perform high-dimensional feature selection through highly non-linear dynamics.

\subsubsection*{Acknowledgments}
We acknowledge supports from grants from R\'egion Ile-de-France and the European Research Council (grant SEQUOIA 724063). Edouard Oyallon was supported by a GPU donation from NVIDIA. We thank Alberto Bietti for interesting discussions and Brett Bernstein for noticing an error in a previous version of this paper.

\bibliography{LC}
\bibliographystyle{plain}

\newpage

\section*{Supplementary material}
Supplementary material for the paper: ``On Lazy Training in Differentiable Programming'' authored by L\'ena\"ic Chizat , Edouard Oyallon and Francis Bach (NeurIPS 2019). This supplementary material is organized as follows:
\begin{itemize}
\item Appendix~\ref{app:tangent}: Remarks on the linearized model
\item Appendix~\ref{app:proofs}: Proofs of the theoretical results
\item Appendix~\ref{app:experiments}: Experimental details and additional results
\end{itemize}

\appendix


\section{The linearized model in supervised machine learning}\label{app:tangent}
\label{sec:tangent}

\subsection{Differentiable models and their linearization}
In this section, we give some details on the interpretation of the linearized model in the case of supervised machine learning. In this setting, a differentiable model is a typically a function  $f:\RR^p\times \RR^d \to \RR^k$ where $\RR^p$ is the parameter space, $\RR^d$ is the input space and $\RR^k$ the output space. One defines a Hilbert space $\Ff$ of functions from $\RR^d$ to $\RR^k$, typically $L^2(\rho_x,\RR^k)$ where $\rho_x$ is the distribution of input samples. The function $h:\RR^p\to \Ff$ considered in the article is then the function which to a vector of parameters associates a predictor $h : w\mapsto f(w,\cdot)$.

In first order approximation around the initial parameters $w_0\in \RR^p$, the parametric model $f(w,x)$ reduces to the following \emph{linearized} or \emph{tangent} model :
\begin{equation}\label{eq:tangent}
\bar f(w,x) = f(w_0,x) + D_w f(w_0,x)(w-w_0).
\end{equation}
where $D_w f$ is the differential of $f$ in the variable $w$.
The corresponding hypothesis class is \emph{affine} in the space of predictors.  It should be stressed that when $f$ is a neural network, $\bar f$ is generally not a linear neural network because it is not linear in $x\in \RR^d$, but in the features $D_w f(w_0,x) \in \RR^{p\times k}$ which generally depend non-linearly on $x$. For large neural networks, the dimension of the features might be much larger than $d$, which makes~$\bar f$ similar to a non-parametric method. Finally, if $f$ is already a linear model, then $f$ and~$\bar f$ are identical.

\paragraph{Kernel method with an offset.} In the case of the square loss, training the affine model~\eqref{eq:tangent} is equivalent to training a linear model in the variables
\[
(\tilde x, \tilde y) \coloneqq(D_w f(w_0,x),y-f(w_0,x)).
\] 
When $k=1$, this is equivalent to a kernel method with the \emph{tangent kernel}~\cite{jacot2018neural} defined as $K:\RR^d\times \RR^d \to \RR$
\begin{equation}\label{eq:tangentkernel}
K(x,x') = D_w f(w_0,x) D_w f(w_0,x')^\intercal.
\end{equation}
This kernel is different from the one traditionally associated to neural networks~\cite{rahimi2009weighted,daniely2016toward} which involve the derivative with respect to the output layer only.
Also, the output data is shifted by the initialization of the model $h(w_0)=f(w_0,\cdot)$. This term inherits from the randomness due to the initialization: it is for instance shown in~\cite{lee2017deep,matthews2018gaussian} that the distribution of $h(w_0)$ converges to a Gaussian process for certain over-parameterized neural networks initialized with random normal weights.

 \subsection{Two-layer neural networks}
 \label{app:randomfeature}
Lazy training has some interesting consequences when looking more particularly at two-layer neural networks. These are functions of the form 
\[
f_m(w,x) = \alpha(m) \sum_{j=1}^m b_{j}\cdot  \sigma(a_j \cdot x),
\]
where $m\in \mathbb{N}$ is the size of the hidden layer and $\sigma: \RR\to \RR$ is an activation function and
the parameters\footnote{We have omitted the bias/intercept, which is recovered by fixing the last coordinate of $x$ to $1$.} are $(\theta_j)_{j=1}^m$ where $\theta_j = (a_j,b_j) \in \RR^{d+1}$, so here the number of parameters is $p=m(d+1)$. We have also introduced a scaling $\alpha(m)>0$ as in Section~\ref{sec:range}.
 
\textbf{Justification for asymptotics.} In this paragraph, we justify the formula for the asymptotic upper bound on $\kappa_{h_m}(w_0)$ given for such models in Section~\ref{sec:range}. Using the assumption that $\EE\phi(\theta_i)=0$ and the fact that the parameters are independents, one has $\EE \Vert h(w_0)\Vert^2 = m \alpha(m)^2 \EE \Vert\phi(\theta)\Vert^2$.
For the differential, from the law of large numbers, we have the estimate
\[
\frac1{m\alpha(m)^2} Dh(w_0)\,Dh(w_0)^\intercal = \frac{1}m \sum_{i=1}^m D\phi(\theta_i)D\phi(\theta_i)^\intercal \underset{m\to \infty}{\longrightarrow} \EE\left[D\phi(\theta)D\phi(\theta)^\intercal\right].
\]
It follows that $\EE \Vert Dh(w_0) \Vert^2 = \EE \Vert Dh(w_0)Dh(w_0)^\intercal\Vert \sim m\alpha(m)^2 \Vert \EE[D\phi(\theta)D\phi(\theta)^\intercal]\Vert$ because we have assumed that $D\phi$ is not identically $0$ on the support of $\theta$. One also has
\[
\Vert D^2h(w_0) \Vert = \sup_{\substack{u\in \RR^{d\times m}\\ \Vert u\Vert \leq 1}} \alpha(m) \sum_{i=1}^m u_i^\intercal D^2\phi(\theta_i)u_i \leq   \alpha(m)  \sup_{\theta_i} \Vert D^2\phi(\theta_i)\Vert \leq \alpha(m)\Lip(D\phi).
\]
From the definition of $\kappa_{h_m}(w_0)$ and the upper bound $\Vert h_m(w_0)-y^\star\Vert \leq \Vert h(w_0)\Vert+ \Vert y^\star \Vert$ we conclude that
\[
 \EE[\kappa_{h_m}(w_0)] \lesssim m^{-\frac12} +(m\alpha(m))^{-1} .
\]

\textbf{Limit kernels and random feature.} 
In this section, we show that the tangent kernel is a \emph{random feature kernel} for neural networks with a single hidden layer. For simplicity, we consider the scaling $\alpha(m)=1/\sqrt{m}$ as in~\cite{du2018gradient} which leads to a non-degenerated limit of the kernel\footnote{Since the definition of gradients depends on the choice of a metric, this scaling is not of intrinsic importance. Rather, it reflects that we work with the Euclidean metric on $\RR^p$. The choice of scaling however becomes important when dealing with training (see also discussion in Section~\ref{sec:range}).} as $m\to \infty$. The associated tangent kernel in Eq.~\eqref{eq:tangentkernel} is the sum of two kernels $K_m(x,x') = K_m^{(a)}(x,x') + K_m^{(b)}(x,x')$, one for each layer, where
\begin{align*}
K_m^{(a)}(x,x') = \frac1m \sum_{j=1}^m (x \cdot x' ) b_j^2\sigma'(a_j \cdot x)\sigma'(a_j \cdot x') &&\text{and}&&
K_m^{(b)}(x,x') = \frac1m \sum_{j=1}^m \sigma(a_j \cdot x)\sigma(a_j \cdot x').
\end{align*}
If we assume that the initial weights $a_j$ (resp.~$b_j$) are independent samples of a distribution on $\RR^{d}$ (resp. a distribution on $\RR$), these are random feature kernels~\cite{rahimi2008random} that converge as $m\to \infty$ to the kernels
\begin{align*}
K^{(a)}(x,x') =  \mathbb{E}_{(a,b)} \left[ (x\cdot x') b^2 \sigma'(a\cdot x)\sigma'(a\cdot x') \right] &&\text{and}&&
K^{(b)}(x,x') =  \mathbb{E}_{a} \left[ \sigma(a\cdot x)\sigma(a\cdot x') \right].
\end{align*}
The second component $K^{(b)}$, corresponding to the differential with respect to the output layer, is the one traditionally used to make the link between these networks and random features~\cite{rahimi2009weighted}. When $\sigma(s)=\max\{s,0\}$ is the rectified linear unit activation and the distribution of the weights~$a_j$ is rotation invariant in $\RR^d$, one has the following explicit formulae~\cite{cho2009kernel}:
\begin{align}\label{eq:explicitkernel}
K^{(a)}(x,x')\! =\!  \frac{(x\cdot x')\EE(b^2)}{2\pi} (\pi-\varphi),
 &&
K^{(b)}(x,x') \!= \! \frac{\Vert x\Vert \Vert x'\Vert \EE(\Vert a\Vert^2 )}{2\pi d}((\pi-\varphi)\cos \varphi+ \sin \varphi)
\end{align}
where $\varphi\in [0,\pi]$ is the angle between the two vectors $x$ and $x'$. See Figure~\ref{fig:randomkernel} for an illustration of this kernel and the convergence of its random approximations.
The link with (independent) random sampling is lost for deeper neural networks, but it is shown in~\cite{jacot2018neural} that tangent kernels still converge when the size of networks increase, for certain architectures.

\begin{figure}
\centering
\includegraphics[scale=0.4]{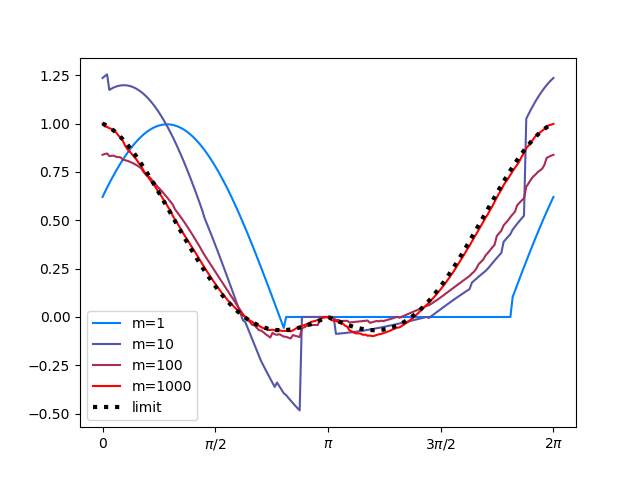}


\caption{Random realizations of the kernels $K_m$ and the limit kernel $K$ of Eq.~\eqref{eq:explicitkernel}. We display the value of $K(x,x')$ as a function of $\varphi=\mathrm{angle}(x,x')$ with $x$ fixed, on a section of the sphere in $\RR^{10}$. Parameters are normal random variables of variance $1$, so $\EE(b^2)=1$ and $\EE(\Vert a\Vert^2)=d$.}\label{fig:randomkernel}
\end{figure}

\subsection{Generalization for the lazy model}\label{subsec:generalization}
As noted in the main text, in supervised machine learning, $\Ff$ is often a Hilbert space of functions on $\RR^d$ and the model $h$ is often of the form $h(w)=f(w,\cdot)$ where $f:\RR^p\times \RR^d\to \RR^k$. A natural question that arises in this context and that is not directly answered by the theorems of Section~\ref{sec:dynamics}, is whether the trained lazy model and the trained tangent model also generalize the same way, i.e.\ whether at training time $T$, it holds $ f(w(T),x) \approx \bar f(\bar w(T),x)$ for points $x\in\RR^d$ that are not in the training set, where $\bar f(w,x)=f(w_0,x)+D_w f(w_0,x)(w-w_0)$. We will see here that it is actually a simple consequence of the bounds.
\begin{proposition}[Generalizing like the tangent model]
Assume that for some $C>0$ it holds $\Vert w_\alpha(T) - \bar w(T)\Vert \leq C\log(\alpha)/\alpha^2$. 
Assume moreover that there exists a set $\Xx\subset \RR^d$ such that $M_1 \coloneqq \sup_{x\in \Xx} \Vert D_w f(w_0,x)\Vert <\infty$ and $M_2 \coloneqq \sup_{x\in \Xx} \Lip(w\mapsto D_w f(w,x)) <\infty$. Then it holds
\[
\sup_{x\in \Xx} \Vert \alpha f(w_\alpha(T),x)- \alpha \bar f(\bar w_\alpha(T),x)\Vert \leq C\frac{\log\alpha}{\alpha}\left(M_1 +\frac12 C\cdot M_2\cdot \log(\alpha)\right) \underset{\alpha\to \infty} {\longrightarrow}0.
\]
\end{proposition}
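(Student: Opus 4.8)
The plan is to fix an input $x\in\Xx$, reduce everything to a second-order Taylor estimate, and only afterwards take the supremum over $\Xx$ (which is free, since $M_1$ and $M_2$ are already uniform suprema over $\Xx$). Writing $w_1\coloneqq w_\alpha(T)$ and $w_2\coloneqq \bar w_\alpha(T)$, I would insert the tangent model evaluated at the lazy parameter and split
\[
f(w_1,x)-\bar f(w_2,x) = \big(f(w_1,x)-\bar f(w_1,x)\big) + \big(\bar f(w_1,x)-\bar f(w_2,x)\big).
\]
The second term is the easy one: since $\bar f(\cdot,x)$ is affine with linear part $D_w f(w_0,x)$, it equals exactly $D_w f(w_0,x)(w_1-w_2)$, so its norm is at most $M_1\Vert w_1-w_2\Vert$ by definition of $M_1$.

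For the first term, which is the Taylor remainder of $g\coloneqq f(\cdot,x)$ at $w_0$, I would write it in integral form as $\int_0^1 \big(D_w f(w_0+s(w_1-w_0),x)-D_w f(w_0,x)\big)(w_1-w_0)\,\d s$ and use that $w\mapsto D_w f(w,x)$ is $M_2$-Lipschitz on $\Xx$. The integrand is then bounded by $M_2\, s\,\Vert w_1-w_0\Vert^2$, and integrating in $s$ gives the standard estimate $\tfrac12 M_2\Vert w_1-w_0\Vert^2$. Combining the two pieces yields, uniformly in $x\in\Xx$,
\[
\Vert f(w_1,x)-\bar f(w_2,x)\Vert \le \tfrac12 M_2\Vert w_1-w_0\Vert^2 + M_1\Vert w_1-w_2\Vert.
\]

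It then remains to insert the quantitative rates and multiply through by $\alpha$. The hypothesis supplies $\Vert w_1-w_2\Vert\le C\log(\alpha)/\alpha^2$, so the affine part contributes $M_1 C\log(\alpha)/\alpha$ after multiplication by $\alpha$. For the remainder I need a bound on the displacement $\Vert w_1-w_0\Vert$, which is not literally among the stated hypotheses but is furnished by the earlier analysis, namely the estimate $\sup_{t\ge0}\Vert w_\alpha(t)-w_0\Vert=O(1/\alpha)$ of Theorem~\ref{th:over}; since $O(1/\alpha)\le C\log(\alpha)/\alpha$ for $\alpha\ge e$, I may use $\Vert w_1-w_0\Vert\le C\log(\alpha)/\alpha$ with the same constant. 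Then $\alpha\cdot\tfrac12 M_2\Vert w_1-w_0\Vert^2\le \tfrac12 C^2 M_2(\log\alpha)^2/\alpha$, and factoring $C\log(\alpha)/\alpha$ out of the sum of the two contributions reproduces exactly the claimed bound; the limit follows because $\log(\alpha)/\alpha\to0$ and $(\log\alpha)^2/\alpha\to0$.

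The analytic content here is light, so the one place deserving care — and the main obstacle — is the displacement estimate $\Vert w_\alpha(T)-w_0\Vert$. It must be imported from the earlier theorems rather than read off the proposition's hypotheses, and the reuse of the single constant $C$ across both the comparison bound and the displacement bound should be justified (or $C$ simply taken large enough to dominate every $O(\cdot)$ constant in play). Everything else — the elementary integral-remainder estimate and the fact that the uniformity over $\Xx$ is automatic from $M_1,M_2$ being suprema — is routine.
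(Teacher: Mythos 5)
Your proposal is correct and follows exactly the paper's decomposition (insert $\bar f(w_\alpha(T),x)$, bound the affine difference by $M_1\Vert w_\alpha(T)-\bar w_\alpha(T)\Vert$, bound the Taylor remainder via the Lipschitz constant $M_2$ of $w\mapsto D_wf(w,x)$). The one place where you deliberately deviate --- controlling the remainder by $\tfrac12 M_2\Vert w_\alpha(T)-w_0\Vert^2$ and importing the displacement bound $\sup_{t}\Vert w_\alpha(t)-w_0\Vert=O(1/\alpha)$ from Theorem~\ref{th:over} --- is in fact the correct reading: the paper's own intermediate display bounds $A_1$ by $\tfrac{\alpha}{2}M_2\Vert w_\alpha(T)-\bar w_\alpha(T)\Vert^2$, which is not what Taylor's theorem around $w_0$ yields and, combined with the stated hypothesis, would give $O((\log\alpha)^2/\alpha^3)$ rather than the claimed $O((\log\alpha)^2/\alpha)$; your flagged reliance on the earlier displacement estimate is exactly what makes the stated constant come out right.
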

\begin{proof}
Let us call $A$ the quantity to be upper bounded, and start with the decomposition
\[
A \leq \sup_{x\in \Xx} \Vert \alpha f(w_\alpha(T),x)- \alpha \bar f(w_\alpha(T),x)\Vert  + \sup_{x\in \Xx} \Vert \alpha \bar f(w_\alpha(T),x)- \alpha \bar f(\bar w_\alpha(T),x)\Vert = A_1 + A_2
\]
By Taylor's theorem applied at each point $x\in X$, one has
\[
A_1 \leq \frac\alpha2 M_2 \Vert w_\alpha(T) -\bar w_\alpha(T)\Vert^2 \leq \frac{C^2\cdot M_2\log(\alpha)^2}{2\alpha}.
\]
It also holds 
\[
A_2 = \alpha \sup_{x\in \Xx} \Vert D_w f(w_0,x)(w_\alpha(T)-\bar w_\alpha(T)) \Vert \leq \frac{M_1C\log(\alpha)}{\alpha}
\]
and the conclusion follows.
\end{proof}


\section{Proofs of the theoretical results}\label{app:proofs}
In all the forthcoming proofs, we use the notations $y(t)=\alpha h(w_\alpha(t))$ and $\bar y(t) = \alpha \bar h(\bar w_\alpha(t))$ for the dynamics in $\Ff$ (they also depend on $\alpha$ although this is not reflected in the notation).  We also write $\Sigma(w) \coloneqq Dh(w)Dh(w)^\intercal$ for the so-called tangent kernel~\cite{jacot2018neural}, which is a quadratic form on $\Ff$. By using the chain rule, we find that the trajectories in $\Ff$ solve the differential equation 
\begin{align*}
\frac{d}{dt} y(t) &= - \Sigma(w_\alpha(t))\nabla R(y(t)), \\
\frac{d}{dt} \bar y(t) &= - \Sigma(w(0))\nabla R(\bar y(t)) .
\end{align*}
with $y(0)=\bar y(0)=\alpha h(w_0)$. Remark that the first differential equation is coupled with $w_\alpha(t)$.

\subsection{Proof for Theorem~\ref{th:generallazy} (finite horizon, non-quantitative)}
For this first proof, we only track the dependency in $\alpha$, and we use $C$ to denote a quantity independent of $\alpha$, that may vary from line to line. For $T>0$, it holds
\begin{align*}
\int_0^T \Vert w'_\alpha(t)\Vert dt  = \int_0^T \Vert \nabla F_\alpha(w_\alpha(t))\Vert dt \leq \sqrt{T}\left(\int_0^T \Vert \nabla F_\alpha(w_\alpha(t))\Vert^2 dt \right)^{\frac12}.
\end{align*}
It follows, by using the fact that $\frac{d}{dt} F_\alpha(w_\alpha(t)) = -\Vert \nabla F_\alpha(w_\alpha(t))\Vert^2 $, that $\sup_{t\in [0,T]} \Vert w_\alpha(t) - w(0)\Vert \leq \left(T\cdot F_\alpha(w_\alpha(t))\right)^{\frac12} \lesssim \frac1\alpha$. In particular, we deduce that $\sup_{t\in [0,T]}\Vert y(t)-y(0)\Vert \leq C$ and $\sup_{t\in [0,T]} \Vert\nabla R(y(t))\Vert\leq C$.

Let us now consider the evolution of $\Delta(t)\coloneqq \Vert y(t)-\bar y(t)\Vert$. It satisfies $\Delta(0)=0$ and 
\begin{align*}
\Delta'(t) &\leq \Vert \Sigma(w_\alpha(t)) \nabla R(y(t)) - \Sigma(w(0)) \nabla R(\bar y(t))\Vert \\
&\leq \Vert (\Sigma(w_\alpha(t)) - \Sigma(w(0))) \nabla R(y(t))\Vert + \Vert \Sigma(w(0)) (\nabla R(y(t)) -  \nabla R(\bar y(t))\Vert\\
&\leq C_1/\alpha + C_2\Delta(t)
\end{align*}
The ordinary differential equation $u'(t) = C_1/\alpha +C_2 u(t)$ with initial condition $u(0)=0$ admits the unique solution $u(t) = \frac{C_1}{\alpha C_2}(\exp(C_2 t)-1)$. Since $\Delta(t)$ is a sub-solution of this system, it follows that $\Delta(t)\leq \frac{C_1}{\alpha C_2}(\exp(C_2 t)-1) \leq C/\alpha$ (notice the exponential dependence in the final time and some other characteristics of the problem). 
Finally, consider the quantity $\delta(t) = \Vert w_\alpha(t)- \bar w_\alpha(t)\Vert$. It holds
\begin{align*}
\delta'(t)&\leq \alpha^{-1}  \Vert Dh(w_\alpha(t))^\intercal \nabla R(f(t)) - Dh(w_0)^\intercal \nabla R(\bar y(t)) \Vert \\
&\leq \alpha^{-1}   \Vert Dh(w_\alpha(t))^\intercal -Dh(w_0)^\intercal \Vert  \Vert \nabla R(y(t))\Vert +  \alpha^{-1} \Vert D h(w_0)\Vert \Vert \nabla R(y) - \nabla R(\bar y(t))\Vert\\
&\leq C \alpha^{-2}
\end{align*}
We thus conclude, since $\delta(0)=0$, that $\sup_{t\in [0,T]} \Vert \delta(t)\Vert \leq \alpha^{-2}$.

\subsection{Proof of Theorem~\ref{th:square} (finite horizon, square loss)}\label{app:squareproof}
\textbf{Step 1.} With the square loss, the objective is still potentially non-convex, but we have the property
\begin{equation*}\label{eq:squareproperty}
\frac{d}{dt} \Vert y(t) - y^*\Vert^2 = - \langle \Sigma(w(t))(y(t)-y^\star),y(t)-y^\star\rangle \leq 0.
\end{equation*}
The proof scheme is otherwise similar as above, but we carry all constants. Let us denote $T_{exit} = \inf\{t>0\;;\; \Vert w_\alpha(t) - w_0\Vert>r\}$. For $t\leq T_{exit}$ it holds
\[
\Vert w'_\alpha(t)\Vert = \Vert \nabla F_\alpha(w_\alpha(t))\Vert \leq \alpha^{-1}\Vert y(t)-y^\star\Vert  \Vert Dh(w_\alpha(t))\Vert \leq \alpha^{-1} \Vert y(0)-y^\star\Vert \Lip(h)
\]
It follows that $\Vert w_\alpha(t)-w(0)\Vert \leq t\alpha^{-1} \Vert y(0)-y^\star\Vert \Lip(h)$ (this bound is tighter for small times, compared to the bound in $\sqrt{t}$ used in the previous proof). Since we have assumed that 
 $\alpha\geq k \Vert y(0)-y^\star\Vert/(r\Lip(h))$, it holds $\Vert w_\alpha(t) -w_0\Vert \leq (t/K)\cdot r\Lip(h)^2 =r$ so $T_{exit}>T$.

\textbf{Step 2.} Now we consider $\Delta(t) = \Vert y(t)-\bar y(t)\Vert$. It holds 
\begin{align*}
\frac12 \frac{d}{dt} \Delta(t)^2 
&= \langle y'(t) -\bar y'(t),y(t) -\bar y(t)\rangle \\ 
 &\leq - \langle \Sigma(w_\alpha(t)) \nabla R(y(t)) - \Sigma(w(0)) \nabla R(\bar y(t)), y(t) - \bar y(t)\rangle \\
&\leq - \langle (\Sigma(w_\alpha(t)) - \Sigma(w(0))) \nabla R(y(t)), y(t) -\bar y(t)\rangle
\end{align*}
where we have used the fact that $\langle \Sigma(w(0)) (\nabla R(y(t)) -  \nabla R(\bar y(t)), y(t) -\bar y(t)\rangle\geq 0$, which is specific to the square loss. Taking the norms and dividing both sides by $\Delta(t)$, it follows
\[
\Delta'(t) \leq \Lip(\Sigma)\cdot \Vert w_\alpha(t) -w(0)\Vert \Vert y(0)-y^\star\Vert \leq 2 \Lip(h)^2\Lip(Dh) t\alpha^{-1} \Vert y(0)-y^\star\Vert^2
\]
where we have used $\Lip(\Sigma) \leq 2\Lip(h)\Lip(Dh)$. Since $\Delta(0)=0$, it follows
\[
\Delta(t) \leq \frac{t^2}{\alpha}  \Lip(h)^2\Lip(Dh) \Vert y(0) -y^\star\Vert^2.
\]
The bound in the statement then follows by writing this upper bound at time $T=K/\Lip(h)^2$.

\textbf{Step 3.} Finally, consider $\delta(t) = \Vert w_\alpha(t)- \bar w_\alpha(t)\Vert$. The bound that we will obtain is not reported in the main text due to space constraints, but proved here for the sake of completeness. As in the previous proof, it holds
\[
\alpha \delta'(t) \leq   \Vert Dh(w_\alpha(t))^\intercal -Dh(w_0)^\intercal \Vert  \Vert \nabla R(y(t))\Vert + \Vert D h(w_0)\Vert \Vert \nabla R(y) - \nabla R(\bar y(t))\Vert = A(t) +B(t).
\]
Let us bound these two quantities separately. On the one hand, it holds for $t\in [0,T]$,
\[
A(t) \leq  \Lip(Dh) \Vert w_\alpha(t) -w_0\Vert \Vert y(0)-y^\star\Vert \leq \frac{t}{\alpha}\Lip(h)\Lip(Dh) \Vert y(0)-y^\star\Vert^2.
\]
On the other hand, it holds for $t\in [0,T]$,
\[
B(t) \leq \frac{t^2}{\alpha} \Lip(h)^3 \Lip(Dh)\Vert y(0)-y^\star\Vert^2.
\]
By integrating these two bounds and summing, we get 
\begin{align*}
\delta(T) &\leq \frac{T^2}{\alpha^2} \Lip(h)^2\Lip(Dh)\Vert y(0)-y^\star\Vert^2\left(\frac{2}{\Lip(h)}+\frac{4T}{3}\Lip(h)\right)\\
&\leq \frac{K^2}{\alpha^2}\frac{\Lip(Dh)}{\Lip(h)^3}\Vert y(0)-y^\star\Vert^2\left(2 +4K/3\right).
\end{align*}
After rearranging the terms, we obtain
\[
\frac{\alpha \Lip(h)}{\Vert y(0)-y^\star\Vert} \Vert w_\alpha(T)-\bar w_\alpha(T)\Vert \leq \frac{K^2}{\alpha} \frac{\Lip(Dh)}{\Lip(h)^2}\Vert y(0)-y^\star\Vert \left( 2+4K/3 \right)
\]
Note that this bound is arranged so that both sides of the inequality are dimensionless, in the sense that they would not change under a simple rescaling of either the norm on $\Ff$ or on $\RR^p$. The left-hand side should be understood as the relative difference between the non-linear and the linearized dynamics, while the right-hand side involves the \emph{scale} of Section~\ref{sec:range}.

\subsection{Proof of Theorem~\ref{th:over} (over-parameterized case)}\label{app:overparam}
Consider the radius $r_0 \coloneqq \sigma_{\min}/(2\Lip(Dh))$. By smoothness of $h$, it holds $\Sigma(w)\succeq \sigma_{\min}^2 \Id/4$ as long as $\Vert w-w_0\Vert< r_0$. Thus Lemma~\ref{lem:stronglyconvexGF} below guarantees that $y(t)$ converges linearly, up to time $T \coloneqq \inf \{ t\geq 0\; ; \; \Vert w_\alpha(t)-w_0\Vert > r_0\}$. It only remains to find conditions on $\alpha$ so that $T=+\infty$.
The variation of the parameters $w_\alpha(t)$ can be bounded for $0\leq t \leq T$ as
\[
\Vert w'_\alpha(t)\Vert \leq \frac1\alpha \Vert Dh(w_\alpha(t))\Vert \Vert\nabla R(y(t))\Vert \leq \frac{2 M}{\alpha} \Vert Dh(w_0)\Vert \Vert y(t)- y^*\Vert.
\]
By Lemma~\ref{lem:stronglyconvexGF}, it follows that for $0\leq t\leq T$, 
\begin{align*}
\Vert w_\alpha(t) - w_0\Vert  &\leq \frac{2M^{3/2}}{\alpha m} \Vert Dh(w_0)\Vert \Vert y(0)- y^*\Vert \int_0^t e^{-(m \sigma_{\min}^2/4) s}ds \\
&\leq \frac{8\kappa^{3/2} }{\alpha \sigma_{\min}^2} \Vert Dh(w_0)\Vert \Vert y(0)-y^*\Vert.
\end{align*}
This quantity is smaller than $r_0$, and thus $T=\infty$, if $\Vert y(0) -y^*\Vert \leq 2\alpha C_0$. This is in particular guaranteed by the conditions on $h(w_0)$ and $\alpha$ in the theorem. 

When $h(w_0)=0$, the previous bound also implies the ``laziness'' property $\sup_{t\geq 0} \Vert w_\alpha(t)-w_0\Vert = O(1/\alpha)$ since in that case $y(0)$ does not depend on $\alpha$. For the comparison with the tangent gradient flow, the first bound is obtained by applying the stability Lemma~\ref{lem:stability}, and noticing that the quantity denoted by $K$ in that lemma is in $O(1/\alpha)$ thanks to the previous bound on $\Vert w_\alpha(t)-w_0\Vert$. For the last bound, we compute the integral over $[0,+\infty)$ of the bound
\begin{align*}
\alpha \Vert w'_\alpha(t)-\bar w'_\alpha(t) \Vert 
&= \Vert Dh(w_\alpha(t))^\intercal \nabla R(y(t)) - Dh(w_0)^\intercal \nabla R(\bar y(t)) \Vert \\
&\leq \Vert Dh(w_\alpha(t)) -Dh(w_0)\Vert  \Vert \nabla R(y(t))\Vert + \Vert Dh(w_0)\Vert \Vert \nabla R(y(t)) - \nabla R(\bar y(t))\Vert.
\end{align*}
It is easy to see from the derivations above that the integral of the first term is in $O(1/\alpha)$. For the second term, we define $t_0\coloneqq 4\log \alpha/(\mu \sigma_{\min}^2)$ and on $[0,t_0]$ we use the smoothness bound
\[
\Vert \nabla R(y(t)) - \nabla R(\bar y(t))\Vert \leq M \Vert y(t) - \bar y(t) \Vert
\]
which integral over $[0,t_0]$ is in $O(\log \alpha/\alpha)$, while on $[t_0,+\infty)$ we use the crude bound
\[
\Vert \nabla R(y(t)) - \nabla R(\bar y(t))\Vert  \leq \Vert \nabla R(y(t))\Vert + \Vert \nabla R(\bar y(t)) \Vert
\]
which integral over $[t_0,+\infty)$ is in $O(1/\alpha)$ thanks to the definition of $t_0$ and the exponential decrease of $\nabla R$ along both trajectories. This is sufficient to conclude. As a side note, we remark that the assumption that $Dh$ is globally Lipschitz could be avoided by considering the more technical definition
\[
\Lip(Dh) \coloneqq \inf\left\{ L>0 \;;\; Dh \text{ is $L$-Lipschitz on a ball centered at $w_0$ of radius $\frac{\sigma_{\min}}{2L}$}\right\}>0,
\]
because then the path $w_\alpha(t)$ never escapes the ball of radius $\frac{\sigma_{\min}}{2L}$ around $w_0$ for $\alpha >\Vert y^*\Vert/C_0$.

\begin{lemma}[Strongly-convex gradient flow in a time-dependent metric]\label{lem:stronglyconvexGF}
Let $F: \Ff \to \RR$ be a $m$-strongly-convex function with $M$-Lipschitz continuous gradient and with global minimizer $y^*$ and let $\Sigma(t):\Ff \to \Ff$ be a time dependent continuous self-adjoint linear operator with eigenvalues lower bounded by $\lambda>0$ for $0\leq t \leq T$. Then solutions on $[0,T]$ to the differential equation
\begin{equation*}\label{eq:modelgradientflow}
y'(t) =  - \Sigma(t) \nabla F(y(t)),
\end{equation*}
 satisfy, for $0\leq t\leq T$,
\begin{equation*}\label{eq:linearconvergence}
\Vert y(t)-y^*\Vert \leq (M/m)^{1/2}\Vert y(0) - y^*\Vert \exp\left( -m \lambda t\right).
\end{equation*}
\end{lemma}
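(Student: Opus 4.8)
The plan is to use the suboptimality in function value as a Lyapunov functional, exploiting the fact that the time-dependent metric $\Sigma(t)$ enters the dissipation only as a quadratic form. I would set $E(t) \coloneqq F(y(t)) - F(y^*) \geq 0$ and differentiate along the flow, using the ODE, to obtain
\[
E'(t) = \langle \nabla F(y(t)), y'(t)\rangle = -\langle \nabla F(y(t)), \Sigma(t)\nabla F(y(t))\rangle \leq -\lambda \Vert \nabla F(y(t))\Vert^2 ,
\]
where the last inequality is precisely where the lower bound $\lambda$ on the eigenvalues of the self-adjoint operator $\Sigma(t)$ is used. This is the reason for tracking the function value rather than the distance directly: $\Sigma(t)$ appears sandwiched between two copies of $\nabla F$, so no upper bound on $\Sigma(t)$ is required.

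Next I would invoke the Polyak--\L ojasiewicz inequality, a standard consequence of $m$-strong convexity, namely $\Vert \nabla F(y)\Vert^2 \geq 2m\,(F(y) - F(y^*))$ for all $y$ (it follows by minimizing the strong-convexity lower bound $F(z) \geq F(y) + \langle \nabla F(y), z-y\rangle + \frac{m}{2}\Vert z-y\Vert^2$ over $z$). Substituting gives the differential inequality $E'(t) \leq -2m\lambda\, E(t)$, and a Gr\"onwall / ODE-comparison argument (as used elsewhere in these proofs) then yields the exponential decay of the function value, $E(t) \leq E(0)\,e^{-2m\lambda t}$.

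Finally I would convert this back into a distance estimate by sandwiching $E$ between quadratics. Since $\nabla F(y^*)=0$, strong convexity gives $\frac{m}{2}\Vert y(t)-y^*\Vert^2 \leq E(t)$, while $M$-smoothness gives $E(0) \leq \frac{M}{2}\Vert y(0)-y^*\Vert^2$. Chaining these with the decay of $E$ produces $\Vert y(t)-y^*\Vert^2 \leq \frac{M}{m}\Vert y(0)-y^*\Vert^2\, e^{-2m\lambda t}$, and taking square roots recovers the claimed bound, the factor $2$ in the exponent halving into $m\lambda$ and the ratio $M/m$ becoming the prefactor $(M/m)^{1/2}$.

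The argument is routine once the right Lyapunov functional is selected, and there is no serious obstacle. The only point demanding care is the decision to track $E(t)=F(y(t))-F(y^*)$ rather than $\frac12\Vert y(t)-y^*\Vert^2$: a direct distance computation would produce the term $\langle \Sigma(t)\nabla F(y(t)), y(t)-y^*\rangle$, which cannot be bounded from below using only the eigenvalue bound on $\Sigma$ and strong monotonicity of $\nabla F$, because $\Sigma(t)$ admits no available upper bound. Keeping careful track of the constants so that the factor $2$ and the square root align with the stated $(M/m)^{1/2}\exp(-m\lambda t)$ is the remaining subtlety.
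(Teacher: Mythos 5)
Your proof is correct and follows essentially the same route as the paper's: both use the function-value gap $F(y(t))-F(y^*)$ as Lyapunov functional, apply the Polyak--\L{}ojasiewicz consequence of strong convexity to get the $-2m\lambda$ decay rate, and then sandwich the gap between the strong-convexity and smoothness quadratics to convert back to a distance bound. No discrepancies worth noting.
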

\begin{proof} By strong convexity, it holds $\bar F(y) \coloneqq F(y)-F(y^*) \leq \frac1{2m}\Vert \nabla F(y)\Vert^2$. It follows
\begin{align*}
\frac{d}{dt} \bar F(y(t))  = - \nabla F(y(t))^\intercal \, \Sigma(t) \, \nabla F(y(t)) \leq -\lambda \Vert \nabla F(y(t))\Vert^2 \leq -2m \lambda \bar F(y) ,
\end{align*}
and thus $\bar F(y(t))  \leq \exp\left( -2m \lambda \right) \bar F(y(0))$ by Gr\"onwall's Lemma. We now use the strong convexity inequality $\Vert y-y^*\Vert^2 \leq \frac2{m} \bar F(y)$ in the left-hand side and the smoothness inequality $\bar F(y) \leq \frac12 M \Vert y - y^*\Vert^2$ in the right-hand side. This yields $\Vert y(t)-y^*\Vert^2 \leq \frac{M}{m}  \exp\left( -2m\lambda \right) \Vert y(0) - y^*\Vert^2$.
\end{proof}

\subsection{Stability Lemma}\label{app:stability}
The following stability lemma is at the basis of the equivalence between lazy training and linearized model training in Theorem~\ref{th:over}. We limit ourselves to a rough estimate sufficient for our purposes.
\begin{lemma}\label{lem:stability}
Let $R: \Ff\to \RR_+$ be a $m$-strongly convex function and let $\Sigma(t)$ be a time dependent positive definite operator on $\Ff$ such that $\Sigma(t)\succeq \lambda \Id$ for $t\geq 0$. Consider the paths $y(t)$ and $\bar y(t)$ on $\Ff$ that solve for $t\geq 0$,
\begin{align*}
y'(t) = -\Sigma(t) \nabla R(y(t)) && \text{and} && \bar y'(t) = - \Sigma(0) \nabla R(\bar y(t)).
\end{align*}
Defining $K\coloneqq \sup_{t\geq 0} \Vert  (\Sigma(t) -\Sigma(0))\nabla R(y(t))\Vert $, it holds for $t\geq 0$,
\[
\Vert y(t) - \bar y(t) \Vert \leq \frac{K\Vert \Sigma(0)\Vert^{1/2}}{\lambda^{3/2} m}.
\]
\end{lemma}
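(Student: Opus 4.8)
The plan is to control $\Delta(t) \coloneqq y(t) - \bar y(t)$ by a Lyapunov (energy) argument, but measured in the metric induced by $\Sigma(0)^{-1}$ rather than in the plain norm of $\Ff$. In the intended application both flows start at the same point ($y(0) = \bar y(0) = \alpha h(w_0)$), so I take $\Delta(0)=0$. The reason for the weighted metric is that $\bar y$ is exactly the gradient flow of $R$ in the geometry defined by $\Sigma(0)$; consequently the constant operator $\Sigma(0)$ coming from the dynamics will cancel against the $\Sigma(0)^{-1}$ in the metric and expose a clean strong-convexity dissipation term.

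Concretely, I would set $E(t) \coloneqq \frac12 \langle \Delta(t), \Sigma(0)^{-1}\Delta(t)\rangle$ and differentiate. Writing $\Sigma(t)\nabla R(y) = \Sigma(0)\nabla R(y) + (\Sigma(t)-\Sigma(0))\nabla R(y)$ and using $y' = -\Sigma(t)\nabla R(y)$ together with $\bar y' = -\Sigma(0)\nabla R(\bar y)$, the $\Sigma(0)$-part pairs with $\Sigma(0)^{-1}$ to give $-\langle \nabla R(y)-\nabla R(\bar y), \Delta\rangle \leq -m\Vert \Delta\Vert^2$ by $m$-strong convexity, while the remainder contributes $-\langle (\Sigma(t)-\Sigma(0))\nabla R(y), \Sigma(0)^{-1}\Delta\rangle$, bounded in absolute value by $K\,\Vert \Sigma(0)^{-1}\Delta\Vert$ from the definition of $K$ and Cauchy--Schwarz. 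This yields $E'(t) \leq -m\Vert\Delta\Vert^2 + K\Vert \Sigma(0)^{-1}\Delta\Vert$.

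It then remains to convert this into a scalar differential inequality using $\Sigma(0)\succeq \lambda\Id$, hence $\Sigma(0)^{-1}\preceq \lambda^{-1}\Id$ and $\Sigma(0)^{-2}\preceq \lambda^{-1}\Sigma(0)^{-1}$. These give $\Vert\Delta\Vert^2 \geq \lambda\langle\Delta,\Sigma(0)^{-1}\Delta\rangle = 2\lambda E$ and $\Vert \Sigma(0)^{-1}\Delta\Vert^2 = \langle\Delta,\Sigma(0)^{-2}\Delta\rangle \leq \lambda^{-1}\langle\Delta,\Sigma(0)^{-1}\Delta\rangle = 2\lambda^{-1}E$. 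Substituting, $E' \leq -2m\lambda E + K\lambda^{-1/2}\sqrt{2E}$, so $u \coloneqq \sqrt{E}$ satisfies $u' \leq -m\lambda\, u + K/\sqrt{2\lambda}$. Since $u(0)=0$, a Gr\"onwall/comparison argument against the corresponding affine ODE bounds $u(t)$ by its stationary value $K/(\sqrt2\, m\lambda^{3/2})$ for all $t\geq 0$, i.e.\ $\Vert\Delta\Vert_{\Sigma(0)^{-1}} \leq K/(m\lambda^{3/2})$. Converting back via $\Sigma(0)\preceq \Vert\Sigma(0)\Vert\,\Id$, which gives $\Vert\Delta\Vert \leq \Vert\Sigma(0)\Vert^{1/2}\Vert\Delta\Vert_{\Sigma(0)^{-1}}$, produces the stated bound.

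The only genuinely non-routine step is the choice of the $\Sigma(0)^{-1}$ metric. In the unweighted norm the dissipation term would read $\langle \Sigma(0)(\nabla R(y)-\nabla R(\bar y)), \Delta\rangle$, where $\Sigma(0)$ is applied to only one factor and is therefore not controllable by strong convexity alone; one does not obtain a sign-definite contraction. The weighted metric is precisely adapted to the linearized flow $\bar y$ and is what makes the dissipation term clean. Everything after that is operator-norm bookkeeping and a one-dimensional comparison principle, so I expect no further obstacle.
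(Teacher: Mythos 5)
Your proposal is correct and follows essentially the same route as the paper: the energy $E(t)=\frac12\langle\Delta,\Sigma(0)^{-1}\Delta\rangle$ is exactly the paper's $h(t)=\frac12\Vert\Sigma(0)^{-1/2}(y-\bar y)\Vert^2$, and both arguments reduce to the same scalar inequality $E'\leq -2m\lambda E+K\sqrt{2E/\lambda}$ followed by a comparison with the stationary value. The only cosmetic difference is that the paper works with the transformed variables $z=\Sigma(0)^{-1/2}y$ and bounds $E$ directly via concavity of the right-hand side (which sidesteps dividing by $\sqrt{E}$ at $E=0$), whereas you pass to $u=\sqrt{E}$; this is a routine technicality.
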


\begin{proof}
Let $\Sigma_0^{1/2}$ be the positive definite square root of $\Sigma(0)$, let $z(t)=\Sigma^{-1/2}_0 y(t)$, $\bar z(t) =\Sigma_0^{-1/2} \bar y(t)$ and let $h:\RR_+\to \RR_+$ be the function defined as $h(t)=\frac12 \Vert z(t)-\bar z(t)\Vert^2$. It holds
\begin{align*}
h'(t) &= \langle z'(t) - \bar z'(t), z(t) - \bar z(t)\rangle \\
& = - \langle \Sigma^{-1/2}_0\Sigma(t)\nabla R(\Sigma^{1/2}_0 z(t)) -\Sigma_0^{1/2}\nabla R(\Sigma^{1/2}_0 \bar z(t)), z(t)-\bar z(t)\rangle\\
& = - \langle \Sigma^{1/2}_0 \nabla R(\Sigma^{1/2}_0 z(t)) - \Sigma^{1/2}_0 \nabla R(\Sigma^{1/2}_0 \bar z(t)), z(t) -\bar z(t)\rangle \tag{$A(t)$}\\
&\quad\, - \langle \Sigma^{-1/2}_0(\Sigma(t) -\Sigma(0)) \nabla R(\Sigma^{1/2}_0 z(t)) ,z(t)-\bar z(t)\rangle. \tag{$B(t)$}
\end{align*}
Since the function $z\mapsto R(\Sigma^{1/2}_0 z)$ is $\lambda m$-strongly convex, one has that $A(t)\leq - 2\lambda m h(t)$.
Using the quantity $K$ introduced in the statement, one has also $\Vert B(t)\Vert \leq  K \Vert z(t) - \bar z(t)\Vert/\sqrt{\lambda} = K\sqrt{2h(t)/\lambda}$.  Summing these two terms yields the bound
\[
h'(t) \leq K\sqrt{2h(t)/\lambda} - 2\lambda m h(t). 
\]
The right-hand side is a concave function of $h(t)$ which is nonnegative for $h(t) \in [0, K^2/(2\lambda^3 m^2)]$ and negative for higher values of $h(t)$. Since $h(0)=0$ it follows that for all $t\geq 0$, one has $h(t)\leq K^2/(2\lambda^3\mu^2)$ and the result follows since $\Vert y(t)-\bar y(t)\Vert\leq \Vert \Sigma(0)\Vert^{1/2} \sqrt{2h(t)}$.
\end{proof}

\subsection{Proof of Theorem~\ref{th:under} (under-parameterized case)}\label{app:underparam}
The setting of this theorem is depicted on Figure~\ref{fig:manifold}. By the rank theorem (a result of differential geometry, see~\cite[Thm.\ 4.12]{lee2003smooth} or~\cite{abraham2012manifolds} for a statement in separable Hilbert spaces), there exists open sets $\Ww_0, \bar \Ww_0  \subset \RR^p$ and $\Ff_0, \bar \Ff_0 \subset \Ff$ and diffeomorphisms $\varphi: \Ww_0 \to \bar \Ww_0$ and $\psi: \Ff_0 \to \bar \Ff_0$ such that $\varphi(w_0)=0$, $\psi(h(w_0))=0$ and $\psi \circ h \circ \varphi^{-1}=\pi_r$, where $\pi_r$ is the map that writes, in suitable bases, $(x_1,\dots,x_p)\mapsto (x_1,\dots,x_r,0,\dots)$. Up to restricting these domains, we may assume that $\bar \Ff_0$ is convex. We also denote by $\Pi_r$ the $r$-dimensional hyperplan in $\Ff$ that is spanned by the first $r$ vectors of the basis. The situation is is summarized in the following commutative diagram:
\begin{center}
\begin{tikzpicture}
  \matrix (m) [matrix of math nodes,row sep=3em,column sep=4em,minimum width=2em]
  {
     \mathcal{W}_0  & \mathcal{F}_0 \\
    \bar{\mathcal{W}}_0  & \bar{\mathcal{F}}_0 \\};
  \path[-stealth]
    (m-1-1) edge node [left] {$\varphi$} (m-2-1) edge node [above] {$h$} (m-1-2)
    (m-2-1.east|-m-2-2) edge node [below] {$\pi_r$} (m-2-2)
    (m-1-2) edge node [right] {$\psi$} (m-2-2);
\end{tikzpicture}
\end{center}
In the rest of the proof, we denote by $C>0$ any quantity that depends on $m$, $M$ and Lipschitz smoothness constants of $h,\psi,\varphi,\psi^{-1},\varphi^{-1}$, but not on $\alpha$. Although we do not do so, this could be translated into explicit constants that depends on the smoothness of $h$ and $R$, on the strong convexity constant of $R$ and on the smallest positive singular value of $Dh(w_0)$ using quantitative versions of the rank theorem~\cite[Thm. 2.7]{boutaib2015lipschitz}.

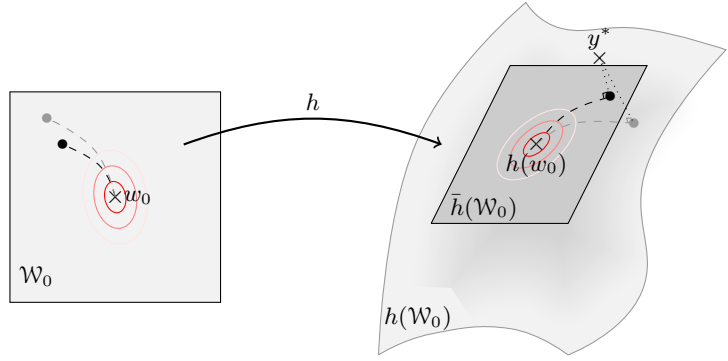
\begin{figure}
\centering
\small
\begin{tikzpicture}[scale=0.7]	
	\draw[fill=gray!10] (-10.0,-2.0) rectangle (-6.0,2);
	\coordinate (T) at (-8,0);
	\coordinate (T1) at (-9,1);
	\coordinate (T2) at (-9.3,1.5);
		\draw (T) node {$\times$};
		\draw (T1) node {$\bullet$};
		\draw[gray!80] (T2) node {$\bullet$};
	\draw[rotate=-80,color=red!100] (T) ellipse (0.3cm and 0.2cm);
	\draw[rotate=-80,color=red!50] (T) ellipse (0.6cm and 0.4cm);
	\draw[rotate=-80,color=red!10] (T) ellipse (0.9cm and 0.6cm);
		\draw[right] (T) node {$w_0$};
		\draw (-9.5,-1.5) node {$\mathcal{W}_0$};
		
		\draw [tension=0.80,dashed] plot [smooth] coordinates { (T) (-8.3,0.6) (T1)};
	\draw [tension=0.80, dashed,gray!80] plot [smooth] coordinates { (T) (-8.4,0.9) (T2)};
	\draw [gray!80,fill=gray!10, tension=0.80]
	plot [smooth] coordinates { (-3.,-3) (-2,-2.6) (-0.5,-2.5) (0.6,-3)}  -- %
	plot [smooth ] coordinates {(0.6,-3) (2,-1.8) (2.1,0.6) (3.6,2.8) } --
	plot [smooth ] coordinates {(3.6,2.8) (2,3.2)  (0.5,3.2)  (-0.2,3.7) } --
	plot [smooth ] coordinates { (-0.2,3.7) (-2.3,1) (-3.,-3) };
	
	\draw [transparent, tension=0.80, blur shadow={shadow blur radius=6ex,shadow scale=0.6, shadow blur steps=12, shadow opacity=12}]
	plot [smooth] coordinates { (-3.,-3) (-2,-2.6) (-0.5,-2.5) (0.6,-3)}  -- 
	plot [smooth ] coordinates {(0.6,-3) (2,-1.8)   } --
	plot [smooth ] coordinates { (2,-1.8)  (2.1,0.6) (3.6,2.8) } --
	plot [smooth ] coordinates {(3.6,2.8) (2,3.2)  (0.5,3.2)  (-0.2,3.7) } --
	plot [smooth ] coordinates { (-0.2,3.7) (-2.3,1) (-3.,-3) };

	\draw [black,fill=gray!40] plot (-2.0,-0.5) -- (-0.5,2.5) -- (2.1,2.5) -- (0.6,-0.5) -- (-2.0,-0.5);
	\draw node[black] at (-2.25,-2.3) {$h(\mathcal{W}_0)$};
	\coordinate (X) at (0,1);
	\draw[rotate=-50,color=red!100] (X) ellipse (0.15cm and 0.3cm);
	\draw[rotate=-50,color=red!50] (X) ellipse (0.3cm and 0.6cm);
	\draw[rotate=-50,color=red!10] (X) ellipse (0.45cm and 0.9cm);
	\draw (X) node {$\times$};
	\draw[below] (X) node {$h(w_0)$};

	\draw[thick,->]	(-6.7,1) to[out=20,in=160] node[above] {$h$}	(-1.8,1);

	\draw node[black] at (-1.0,-0.2) {$\bar h(\mathcal{W}_0)$};
	
	\coordinate (Y) at (1.2,2.65);
	\draw     (Y) node {$\times$};
	\draw[above] (Y) node {$y^*$};
		
	\coordinate (pY) at (1.4,1.9);
	\coordinate (ppY) at (1.85,1.4);
	\draw         (pY) node {$\bullet$};
	\draw[gray!80]         (ppY) node {$\bullet$};
	\draw[dotted] 	(Y)	-- 	(pY);
	\draw[dotted] 	(Y)	-- 	(ppY);
	
	\draw [tension=0.80,dashed] plot [smooth] coordinates { (X) (0.6,1.6) (pY)};
	\draw ($ (pY) + (-.12,-.02) $) -- ($ (pY) + (-0.14,0.07) $) -- ($ (pY) + (-0.04,0.1) $);
	
	\draw [tension=0.80, dashed,gray!80] plot [smooth] coordinates { (X) (0.7,1.4) (ppY)};
	\draw [gray!80]($ (ppY) + (-.11,.01) $) -- ($ (ppY) + (-0.14,0.1) $) -- ($ (ppY) + (-0.04,0.1) $);
\end{tikzpicture}
\caption{There is a small neighborhood $\Ww_0 \subset \RR^p$ of the initialization $w_0$, which image by $h$ is a differentiable manifold in $\Ff$. In the lazy regime, the optimization paths (both in $\mathcal{W}$ and in $\Ff$) for the non-linear model $h$ (dashed gray paths) are close to those of the linearized model $\bar h$ (dashed black paths) until convergence or stopping time (Section~\ref{sec:dynamics}). This figure illustrates the under-parameterized case where $p< \dim(\Ff)$.}\label{fig:manifold} 
\end{figure}

\paragraph{Step 1.}
Our proof is along the same lines as that of Theorem~\ref{th:over}, but performed in $\Pi_r$ which can be thought of as a straighten up version of $h(\Ww_0)$. Consider the function $G_\alpha$ defined for $g \in \bar \Ff_0$ as $G_\alpha (g) = R(\alpha \psi^{-1}(g))/\alpha^2$. The gradient and Hessian of $G_\alpha$ satisfy, for $v_1,v_2\in \RR^p$,
\begin{align*}
\nabla G_\alpha(g) &= \frac1\alpha (D{\psi(g)^{-1}})^\intercal \nabla R(\alpha \psi^{-1} (g)),\\
D^2 G_\alpha(g)(v_1,v_2) &= v_1^\intercal (D{\psi(g)^{-1}})^\intercal \nabla^2R(\alpha \psi^{-1}(g))D{\psi(g)^{-1}}v_2 \\
&\quad + \frac1\alpha D^2{\psi(g)^{-1}}(v_1,v_2)^\intercal \nabla R(\alpha \psi^{-1}(g)) .
\end{align*}
The second order derivative of $G_\alpha$ is the sum of a first term with eigenvalues in an interval $[C^{-1},C]$, and a second term that goes to $0$ as $\alpha$ increases.  It follows that  $G_\alpha$ is smooth and strongly convex for $\alpha$ large enough.
Note that if $R$ or $\psi^{-1}$ are not twice continuously differentiable, then the Hessian computations should be understood in the distributional sense (this is sufficient because the functions involved are Lipschitz smooth).
Also, let $g^*$ be a minimizer of the lower-semicontinuous closure of $G_\alpha$ on the closure of $\bar \Ff_0$. By strong convexity of $R$ and our assumptions, it holds
\[
\Vert g^*\Vert^2 \leq \frac2{m} (G_\alpha(0)- G_\alpha(g^*))\leq \frac{2R(0)}{\alpha^2 m},
\]
so $g^*$ is in the interior of $\bar \Ff_0$ for $\alpha$ large enough and is then the unique minimizer of $G_\alpha$.

\paragraph{Step 2.} Now consider $T:= \inf\{ t\geq 0\;;\; w_\alpha(t) \notin \Ww_0\}$. For $t\in [0,T)$, the trajectory $w_\alpha(t)$ of the gradient flow~\eqref{eq:lazyGF} has ``mirror'' trajectories in the four spaces in the diagram above. Let us look more particularly at $g(t) \coloneqq \pi_r\circ \varphi(w_\alpha(t)) = \psi \circ h(w_\alpha(t))$ for $t<T$. In the following computation, we write $D\varphi$ for the value of the differential  at the corresponding point of the dynamic $D\varphi(w_\alpha(t))$ (and similarly for other differentials). By noticing that $Dh= D{\psi^{-1}}D{\pi_r}D{\varphi}$, we have
\begin{align*}
g'(t) &= - \frac1\alpha D{\psi} Dh Dh^\intercal \nabla R(\alpha \psi^{-1}(g(t)) \\
&= - \frac1\alpha D{\pi_r}D{\varphi}D{\varphi}^\intercal D{\pi_r}^\intercal (D{\psi^{-1}})^\intercal \nabla R(\alpha \psi^{-1}(g(t)).
\end{align*}
so $g(t)$ remains in $\Pi_r$. Also, the first $r\times r$ block of $D{\pi_r}D{\varphi}D{\varphi}^\intercal D{\pi_r}^\intercal$ is positive definite on $\Pi_r$, with a positive lower bound (up to taking $\Ww_0$ and $\Ff_0$ smaller if necessary). Thus by Lemma~\ref{lem:stronglyconvexGF}, there are constants $C_1,C_2>0$ independent of $\alpha$ such that, for $t \in [0,T)$,
$
\Vert g(t)-g^*\Vert \leq C_1\Vert g(0) - g^*\Vert \exp\left( -C_2 t\right).
$
\paragraph{Step 3.} Now we want to show that $T=+\infty$ for $\alpha$ large enough. It holds
\begin{align*}
w'(t) = -\frac1\alpha Dh^\intercal\nabla R(\alpha h(w_\alpha(t))=  D\varphi^\intercal D{\pi_r}^\intercal \nabla G_\alpha(g(t))
\end{align*}
and, by Lipschitz-smoothness of $G_\alpha$ (Step 1), $\Vert \nabla G_\alpha(g(t))\Vert \leq \frac{C}{\alpha}\Vert g(t)-g^*\Vert$ hence
\[
\Vert w_\alpha(t)-w_0\Vert \leq \frac C\alpha \int_0^t \exp(-C_2s)ds\leq \frac{C}{\alpha C_2}.
\]
Thus, by choosing $\alpha$ large enough, one has  $w_\alpha(t)\in \Ww_0$ for all $t\geq 0$, so $T=\infty$ and the theorem follows. 

\section{Experimental details and additional results}\label{app:experiments}

\subsection{Many neurons dynamics visualized} 
The setting of Figure~\ref{fig:parameters} is the same as for panels (a)-(b) in Figure~\ref{fig:cover} except that $m=200, n=200$: it allows to visualize behavior of the training dynamics for a larger number of neurons. Symmetrized initialization to set $f(w_0,\cdot)=0$ was used on panel~(c) but not on panel~(b), where we see that the neurons need to move slightly more in order to compensate for the non-zero initialization. As on Figure~\ref{fig:cover}, we observe a good behavior in the non-lazy regime for small $\tau$.

\begin{figure}[h]
\centering
\begin{subfigure}{0.33\linewidth}
\centering
\includegraphics[scale=0.35]{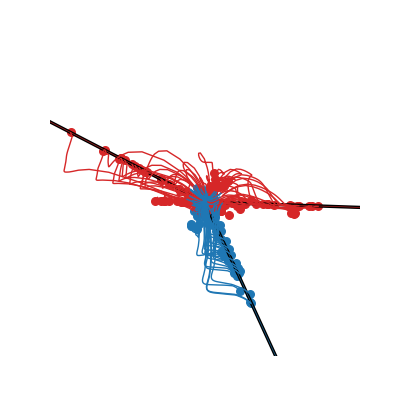}


\caption{Non-lazy training ($\tau=0.1$)}
\end{subfigure}%
\begin{subfigure}{0.33\linewidth}
\centering
\includegraphics[scale=0.35]{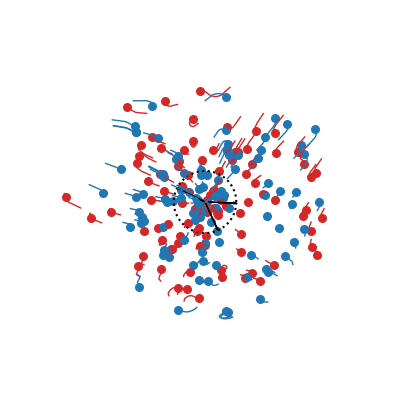}


\caption{Lazy ($\tau=2$, not symmetrized)}
\end{subfigure}%
\begin{subfigure}{0.33\linewidth}
\centering
\includegraphics[scale=0.35]{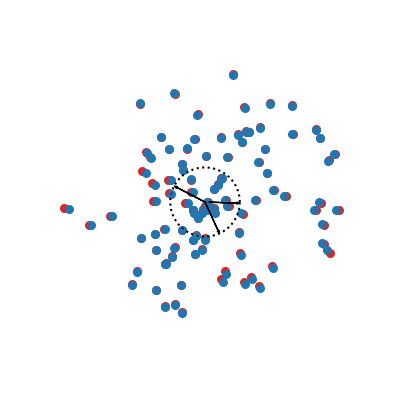}


\caption{Lazy ($\tau=2$, symmetrized)}
\end{subfigure}
\caption{Training a two-layer ReLU neural network initialized with normal random weights of variance $\tau^2$, as in Figure~\ref{fig:cover}, but with more neurons. In this $2$-homogeneous setting, changing $\tau^2$ is equivalent to changing $\alpha$ by the same amount so lazy training occurs for large $\tau$. }
\label{fig:parameters}
\end{figure}

\subsection{Stability of activations} 
We define here the ``stability of activations'' mentioned in Section~\ref{sec:CNNexpe}. We consider a ReLU layer $\ell$ of size $n_\ell$ in a neural network and the test input data $(x_i)_{i=1}^N$ (the test images of CIFAR10 in our case). We call $z_{ij}(T) \in \RR$ the value of the pre-activation (i.e.\ the value that goes through the ReLU function as an input) of index $j$ on the data sample $i$, obtained with the parameters of the network at epoch $T$. The ``stability of activations'' for this layer is defined as $s_\ell \coloneqq \frac{Q}{n_\ell\times N}$ where $L$ is the number of ReLU layers, $Q$ is the number of indices $(i,j)$ that satisfy $\sign(z_{ij}(T_{last})) = \sign(z_{ij}(T_{init}))$ for $i\in \{1,\dots, B\}$ and $j\in \{1,\dots,n_\ell\}$, where $T_{init}$ refers to initialization and $T_{last}$ to the end of training. The quantity that we report on Figure~\ref{fig:CNNs}(a) is the average of $s_\ell$ over all ReLU layers of the VGG-11 network, for various values of $\alpha$.

\subsection{Spectrum of the tangent kernel}
In the setting of Figure~\ref{fig:CNNs}(a), we want to understand why the linearized model (that is, trained for large $\alpha$) could not reach low training accuracies in spite of being highly over-parameterized. Figure~\ref{fig:VGGspectrum}(a) shows the train and test losses after $70$ epochs where we see that the training loss is far from $0$ for all $\alpha\geq 10$. 
We report on Figure~\ref{fig:VGGspectrum}(b) the normalized and sorted eigenvalues  $\sigma_i^2$ of the tangent kernel $Dh(w_0)Dh(w_0)^\intercal$ (notice the log-log scale) evaluated for two distinct input data sets $(x_i)_{i=1}^n$ of size $n=500$: (i) images randomly sampled from the training set of CIFAR10 and (ii) images with uniform random pixel values.  Since there are $10$ output channels, the corresponding space $\Ff$ has $10\times 500$ dimensions. We observe that there is a gap of $1$ order of magnitude between the $0.2\%$ largest eigenvalues and the remaining ones---which causes the ill conditionning---and then a decrease of order approximately $O(1/i)$. We observe a similar pattern with the CIFAR10 inputs and completely random inputs, which suggests that this conditioning is intrinsic to the linearized VGG model. Note that modifying the neural network architecture to improve this conditioning, or using optimization methods that are better adapted to ill-conditionned models, is beyond the scope of the present paper.

\begin{figure}[h]
\centering
\begin{subfigure}{0.4\linewidth}
\includegraphics[scale=0.5]{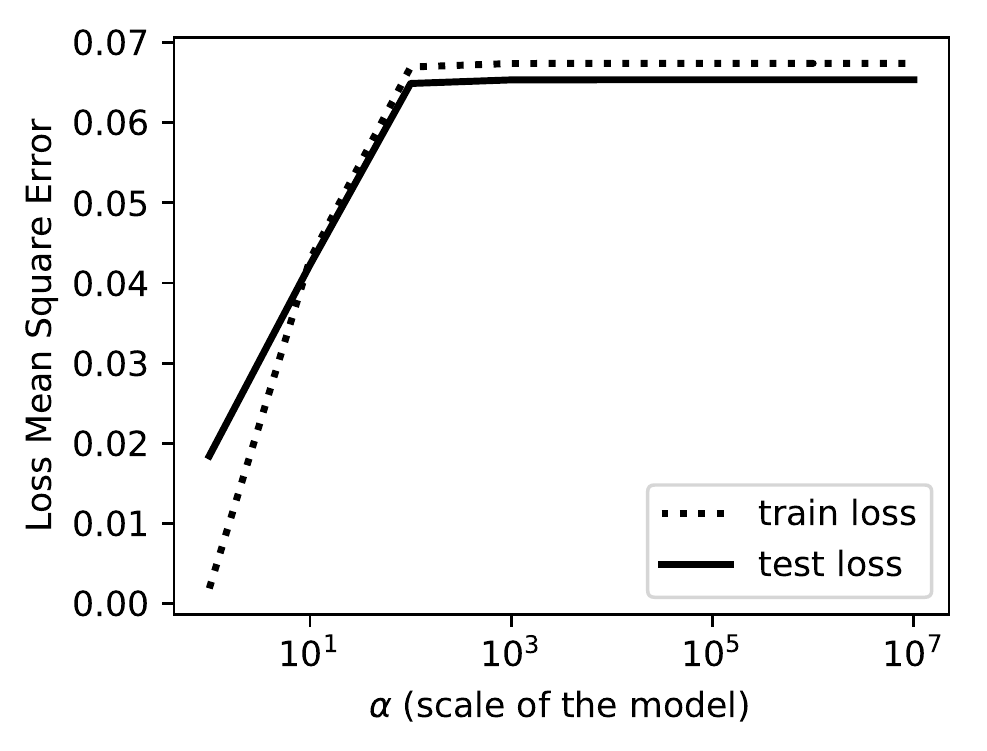}
\caption{}
\end{subfigure}%
\begin{subfigure}{0.4\linewidth}
\includegraphics[scale=0.5]{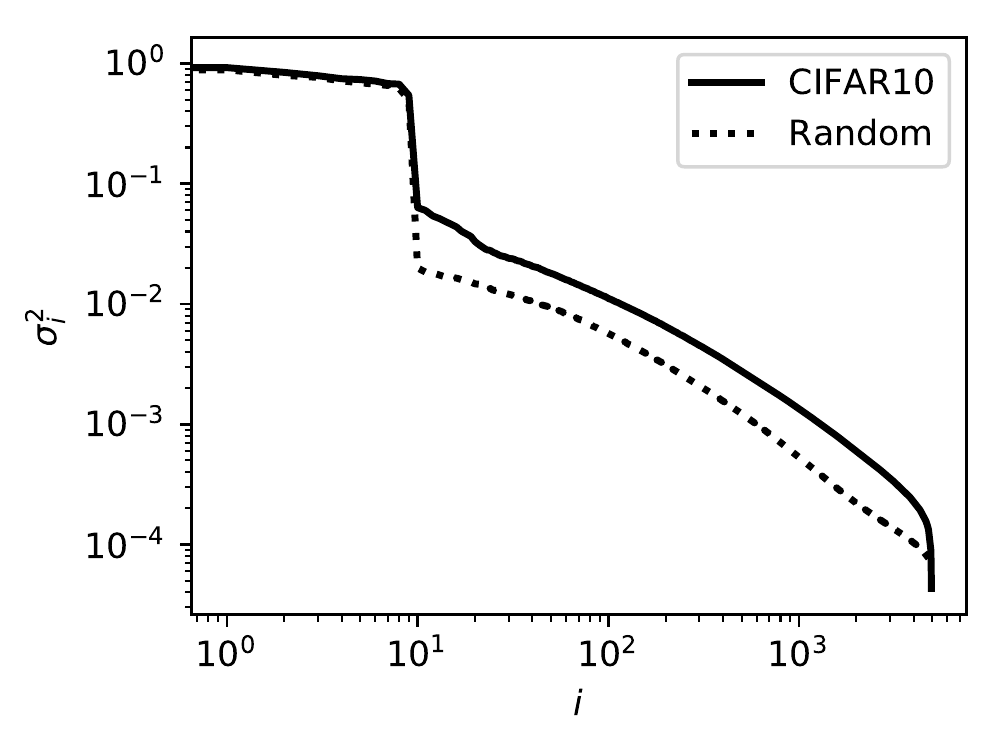}
\caption{}
\end{subfigure}
\caption{(a) End-of training train and test loss. (b) Spectrum of the tangent kernel $Dh(w_0)Dh(w_0)^\intercal$ for the VGG11 model on two data sets. }
\label{fig:VGGspectrum}
\end{figure}

\end{document}